\def\calD{{\mathcal{D}}}
\def\calL{{\mathcal{L}}}
\def\calP{{\mathcal{P}}}
\def\calU{{\mathcal{U}}}
\def\calX{{\mathcal{X}}}
\def\Real{{\mathbb{R}}}
\definecolor{gray}{RGB}{128,128,128}
\newtheorem{assumption}{Assumption}
\newtheorem{theorem}{Theorem}
\newtheorem{lemma}{Lemma}
\DeclareMathOperator{\argmax}{argmax}
\DeclareMathOperator*{\argmin}{arg\,min}
\DeclareMathOperator{\NetReg}{Net-Reg}
\DeclareMathOperator{\col}{col}
\DeclareMathOperator{\inout}{in}
\DeclareMathOperator{\outin}{out}
\newcolumntype{M}[1]{>{\centering\arraybackslash}m{#1}}
\newcolumntype{N}{@{}m{0pt}@{}}
\title{\LARGE \bf Distributed Online Convex Optimization with Adversarial Constraints: Reduced Cumulative Constraint Violation Bounds under Slater's Condition
}
\author{Xinlei Yi, Xiuxian Li, Tao Yang, Lihua Xie,\\ Yiguang Hong, Tianyou Chai, and Karl H. Johansson% <-this % stops a space
\thanks{X. Yi is with Lab for Information \& Decision Systems, Massachusetts Institute of Technology, Cambridge, MA 02139, USA.
        {\tt\small xinleiyi@mit.edu}.}%
\thanks{X. Li and Y. Hong are with Department of Control Science and Engineering, College of Electronics and Information Engineering, Tongji University, Shanghai, 201804, China,  and they are also affiliated with Shanghai Research Institute for Intelligent Autonomous Systems. {\tt\small xli@tongji.edu.cn}, {\tt\small yghong@iss.ac.cn}.}
\thanks{T. Yang and T. Chai are with the State Key Laboratory of Synthetical Automation for Process Industries, Northeastern University, 110819, Shenyang, China. {\tt\small \{yangtao,tychai\}@mail.neu.edu.cn}.}
\thanks{L. Xie is with School of Electrical and Electronic Engineering,
Nanyang Technological University, 50 Nanyang Avenue, Singapore 639798. {\tt\small elhxie@ntu.edu.sg}.}
\thanks{K. H. Johansson is with Division of Decision and Control Systems, School of Electrical Engineering and Computer Science, KTH Royal Institute of Technology, and he is also affiliated with Digital Futures, 10044, Stockholm, Sweden. {\tt\small  kallej@kth.se}.}

}
\begin{document}

\maketitle
\thispagestyle{plain}
\pagestyle{plain}

\begin{abstract}\label{online_op:Abstract}
This paper considers distributed online convex optimization with adversarial constraints. In this setting, a network of agents makes decisions at each round, and then only a portion of the loss function and a coordinate block of the constraint function are privately revealed to each agent. The loss and constraint functions are convex and can vary arbitrarily across rounds. The agents collaborate to minimize network regret and cumulative constraint violation. A novel distributed online algorithm is proposed and it achieves an $\mathcal{O}(T^{\max\{c,1-c\}})$ network regret bound and an $\mathcal{O}(T^{1-c/2})$ network cumulative constraint violation bound, where $T$ is the number of rounds and $c\in(0,1)$ is a user-defined trade-off parameter. When Slater's condition holds (i.e, there is a point that strictly satisfies the inequality constraints), the network cumulative constraint violation bound is reduced to $\mathcal{O}(T^{1-c})$. Moreover, if the loss functions are strongly convex, then the network regret bound is reduced to $\mathcal{O}(\log(T))$, and the network cumulative constraint violation bound is reduced to $\mathcal{O}(\sqrt{\log(T)T})$ and $\mathcal{O}(\log(T))$ without and with Slater's condition, respectively. To the best of our knowledge, this paper is the first to achieve reduced (network) cumulative constraint violation bounds for (distributed) online convex optimization with adversarial constraints under Slater's condition. Finally, the theoretical results are verified through numerical simulations.

\emph{Index Terms}---Adversarial constraints, cumulative constraint violation, distributed optimization, online convex optimization, Slater's condition.
\end{abstract}
%%%%%%%%%%%%%%%%%%%%%%%%%%%%%%%%%%%%%%%%%%%%%%%%%%%%%%%%%%%%%%%%%%%%%%%%%%%%%%%%
\section{INTRODUCTION}\label{online_opsec:intro}
%resource allocation \cite{zhou2018incentive}, target tracking \cite{yi2020distributed},
Online convex optimization is a sequential decision making problem with a sequence of arbitrarily varying convex loss functions. Specifically, at each round $t$, a decision maker selects a decision $x_t\in\calX$, where $\calX\subseteq\mathbb{R}^p$ is a known closed convex set with $p$ being a positive integer. After the selection, a convex loss  function $l_t:\mathbb{R}^p\rightarrow \mathbb{R}$ is  revealed. The goal of the decision maker is to minimize the cumulative loss across $T$ rounds. The standard performance measure is regret
\begin{align*}%\label{online_op:intro_def_reg}
\sum_{t=1}^{T}l_t(x_t)-\min_{x\in\mathcal{X}}\sum_{t=1}^{T}l_t(x),
\end{align*}
which is the difference between the accumulative loss and the loss obtained by the best fixed decision in hindsight.
Due to its wide applications such as online display advertising \cite{goldfarb2011online}, online linear regression  \cite{wang2014online}, and reactive power management \cite{kekatos2014stochastic}, online convex optimization has been extensively studied over the past decades, e.g.,
\cite{zinkevich2003online,hazan2007logarithmic,agarwal2010optimal,zhang2018dynamic,pmlr-v124-zhang20a,ijcai2020-731}. For more applications and  background, please refer to the monographs \cite{shalev2012online,hazan2016introduction}.

\subsection{Online Convex Optimization with Adversarial Constraints}
It is well-known that the projection-based online gradient descent algorithm
\begin{align}\label{online_op:intro_ogd_alg}
	x_{t+1}=\calP_{\mathcal{X}}(x_t-\alpha \nabla l_t(x_t)),
\end{align}
where $\calP_{\mathcal{X}}(\cdot)$ is the projection onto  set $\mathcal{X}$ and $\alpha>0$ is the stepsize, achieves an $\mathcal{O}(\sqrt{T})$ regret bound for convex loss functions with bounded subgradients \cite{zinkevich2003online}, which is a tight bound up to constant factors \cite{hazan2007logarithmic}. The regret bound can be reduced under more stringent strong convexity conditions on the loss functions \cite{hazan2007logarithmic,shalev2012online,hazan2016introduction}. Although the algorithm \eqref{online_op:intro_ogd_alg} seems simple, the projection operator can yield heavy computation and/or storage burden when the constraint set is complicated. For example, in practice, the constraint set $\mathcal{X}$ is often characterized by inequality constraints, i.e.,
\begin{align}\label{online_op:intro_const_set}
\mathcal{X}=\{x:~g(x)\le\bm{0}_{m},~x\in\mathbb{X}\},
\end{align}
where $m$ is a positive integer, $\mathbb{X}\subseteq\mathbb{R}^{p}$ is a closed convex set which normally is a simple set, e.g., a box or a ball, and $g(x):\mathbb{R}^{p}\rightarrow \mathbb{R}^m$ is the constrain function which is convex. To tackle this challenge, online convex optimization with long-term constraints has been considered, e.g., \cite{mahdavi2012trading,jenatton2016adaptive,yu2020lowJMLR}. In this new problem, the decisions are selected from the simple set $\mathbb{X}$ and the inequality constraints  should be satisfied in the long-term on average. This is measured by constraint violation
\begin{align}\label{online_op:intro_def_cons}
	\Big\|\Big[\sum_{t=1}^Tg(x_{t})\Big]_+\Big\|,
\end{align}
which is the violation of the accumulative constraint function. Here $[\:\cdot\:]_+$ is the projection onto the nonnegative space. In this case, the goal of the decision maker is to minimize both regret and constraint violation.

The problem above has been further extended to the adversarial constraints setting, i.e., the constraint function is time-varying, can be arbitrarily and adversarially designed, and is revealed to the decision maker after selecting its decision at each round. In this case, let $g_t:\mathbb{R}^{p}\rightarrow \mathbb{R}^m$ denote the constraint function of the $t$-th round and then the constraint violation metric is $\|[\sum_{t=1}^Tg_t(x_{t})]_+\|$. Online convex optimization with adversarial constraints has also been extensively studied, e.g.,  \cite{paternain2016online,sun2017safety,yu2017online,neely2017online,chen2017online,pmlr-v97-liakopoulos19a,sadeghi2019online,wei2020online}. These works usually propose online primal--dual algorithms and achieve sublinear regret and constraint violation bounds. For example,  \cite{sun2017safety} achieved an $\mathcal{O}(\sqrt{T})$ regret bound and an $\mathcal{O}(T^{3/4})$ constraint violation bound. In \cite{yu2017online,neely2017online} the bound for constraint violation was reduced to $\mathcal{O}(\sqrt{T})$ under Slater's condition (i.e., there exists a point $x_s\in \mathbb{X}$ and a constant $\epsilon_s>0$, such that $g_{t}(x_s)\le-\epsilon_s{\bf 1}_{m}$ for all $t$). Slater's condition is an example of a constraint qualification that guarantees strong duality in convex optimization problems with inequality constraints \cite{boyd2004convex}. This condition guarantees that the optimal solution of the primal problem can be achieved without violating any of the constraints. In the context of online convex optimization, under Slater's condition, algorithms can adaptively adjust the decision at each round to ensure that the constraints are not violated significantly, even if the optimization problem is evolving dynamically.

Note that the constraint violation metric defined in \eqref{online_op:intro_def_cons} takes the summation across rounds before the projection operation $[\:\cdot\:]_+$. As a result, it allows strict feasible decisions that have large margins compensate constraint violations at many rounds. In this way, even if the constraint violation grows sublinearly, the constraints could be violated at many rounds. To avoid this potential drawback, stricter forms of constraint violation metric have been proposed in \cite{NIPS2018_7852,yi2021regret}. For instance, \cite{NIPS2018_7852} proposed cumulative constraint violation
\begin{align}\label{online_op:regc1}
	\Big\|\sum_{t=1}^T[g(x_{t})]_+\Big\|,
\end{align}
and cumulative squared constraint violation
\begin{align}\label{online_op:regc2}
	\sum_{t=1}^T\|[g(x_{t})]_+\|^2.
\end{align}
Both forms of metric \eqref{online_op:regc1} and \eqref{online_op:regc2} take into account all constraints that are not satisfied, and the metric \eqref{online_op:regc1} is stricter than the constraint violation metric defined in \eqref{online_op:intro_def_cons}.
In  \cite{NIPS2018_7852}, an $\mathcal{O}(T^{\max\{c,1-c\}})$ regret bound and an $\mathcal{O}(T^{1-c/2})$  cumulative constraint violation bound have been achieved, where $c\in(0,1)$ is a user-defined trade-off parameter enabling the trade-off between these two bounds. Moreover, when the loss functions are strongly convex, the regret and  cumulative constraint violation bounds can be reduced to $\mathcal{O}(\log(T))$ and $\mathcal{O}(\sqrt{\log(T)T})$, respectively. The key idea to achieve these results is to use the clipped constraint function $[g]_+$ to replace the original constraint function $g$. As pointed out in \cite{NIPS2018_7852}, with this idea, the bounds for constraint violation achieved in some existing works, e.g., \cite{mahdavi2012trading,jenatton2016adaptive}, still hold when using the stricter metric \eqref{online_op:regc1} to replace the standard metric \eqref{online_op:intro_def_cons}. However, this idea becomes ineffective when extending the constraint violation bounds achieved in \cite{yu2017online,neely2017online}, as the use of the clipping operation renders Slater's condition ineffective\footnote{Noting that the clipped function is nonnegative, it is impossible to find a point where the value of the clipped function is strictly less than zero.}. It remains an open problem {\it how to achieve reduced cumulative constraint violation bounds for online convex optimization with adversarial constraints under Slater's condition}.

\subsection{Distributed Online Convex Optimization with Adversarial Constraints}
Noting that distributed paradigm can address critical issues in centralized processing such as data privacy, data security, and single point failures, distributed online convex optimization has also been extensively studied, e.g., \cite{mateos2014distributed,koppel2015saddle,hosseini2016online,zhang2017projection,shahrampour2018distributed,akbari2018individual,wan2020projection,yuan2020distributed,carnevale2020distributed,lee2016coordinate,lee2017stochastic}. In distributed online convex optimization, there are a group of agents (decision makers). At each round $t$, each agent $i$ selects a decision $x_{i,t}\in\mathcal{X}$, and after the selection a portion of the global loss function $l_{t}$ is revealed to agent $i$ only. The goal of the agents is to minimize the network-wide accumulated loss, and the corresponding performance measure is network regret
\begin{align*}
\frac{1}{n}\sum_{i=1}^{n}\Big(\sum_{t=1}^{T}l_{t}(x_{i,t})
	-\min_{x\in\mathcal{X}}\sum_{t=1}^{T}l_{t}(x)\Big).
\end{align*}

Similarly, in order to avoid the potential computation and/or storage challenge caused by the projection operator when using projection-based algorithms, distributed online convex optimization with long-term constraints has also been considered, e.g., \cite{yuan2017adaptive,yuan2021distributed,yuan2021distributedb}. In this problem, the constraint set $\mathcal{X}$ is characterized by inequality constraints as described in \eqref{online_op:intro_const_set}, and each agent knows the simple set $\mathbb{X}$ and the constraint function $g$ in advance. Similar to the centralized case, the decisions are selected from $\mathbb{X}$ instead of $\mathcal{X}$ and the inequality constraints should be satisfied in the long-term on average, which is measured by network constraint violation
\begin{align*}
	\frac{1}{n}\sum_{i=1}^n\Big\|\Big[\sum_{t=1}^Tg(x_{i,t})\Big]_+\Big\|
\end{align*}
or network cumulative constraint violation
\begin{align*}
	\frac{1}{n}\sum_{i=1}^n\sum_{t=1}^T\|[g(x_{i,t})]_+\|.
\end{align*}
The network regret and  constraint violation bounds achieved in \cite{yuan2017adaptive,yuan2021distributed,yuan2021distributedb} are similar to their centralized counterparts. Specifically, \cite{yuan2017adaptive} achieved an $\mathcal{O}(T^{0.5+\beta})$ network regret bound and an $\mathcal{O}(T^{1-\beta/2})$ network constraint violation bound, where $\beta\in(0,0.5)$ is a user-defined parameter. When the loss functions are quadratic and the constraint function is linear,  \cite{yuan2021distributed} achieved an $\mathcal{O}(T^{\max\{c,1-c\}})$ network regret bound and an $\mathcal{O}(T^{1-c/2})$ network cumulative constraint violation bound. The quadratic loss and linear constraint functions were relaxed by convex functions in  \cite{yuan2021distributedb} and the same  network regret and cumulative constraint violation bounds were still achieved.

The distributed online convex optimization with long-term constraints setting was extended to a more general scenario in \cite{yi2021regretTAC}, where the constraint function is adversarial and at each round only a coordinate block of the global constraint function is privately revealed to each agent after selecting its decision. The same network regret and cumulative constraint violation bounds as achieved in \cite{yuan2021distributed,yuan2021distributedb} were also established.  However, similar to the centralized case, it is still unclear {\it how to reduce network cumulative constraint violation bounds for distributed online convex optimization with adversarial constraints under Slater's condition}, which is the main motivation behind this paper.

\subsection{Main Contributions}

In this paper, similar to \cite{yi2021regretTAC}, we study the general distributed online convex optimization with adversarial constraints, and adopt network regret and cumulative constraint violation as performance measures. However, different from \cite{yi2021regretTAC}, we also consider the scenario where Slater's condition holds. To tackle this problem, we propose a novel distributed online primal--dual composite mirror descent algorithm. To avoid the ineffectiveness of Slater's condition, the proposed algorithm does not simply replace the original constraint function with the clipped constraint function, as done in \cite{NIPS2018_7852,yuan2021distributed,yuan2021distributedb,yi2021regretTAC}. Instead, it updates the dual variables by directly maximizing the regularized Lagrangian function, which can be explicitly calculated using the clipped constraint function. Consequently, for the scenario without Slater's condition, all squared constraint violations can be accumulated, leveraging the fact that $a^\top[a]_+=\|[a]_+\|^2$ for any vector $a$, and state-of-the-art (network) cumulative constraint violation bounds can be achieved. Moreover, for the scenario with Slater's condition, all constraint violations can be accumulated directly by summing the dual variables. As a result, we can demonstrate that reduced (network) cumulative constraint violation bounds can be achieved under Slater's condition through a properly designed analysis.

The performance guarantees for the proposed algorithm are summarized below.
\begin{itemize}
	\item We show in Theorem~\ref{online_op:corollaryreg} that the proposed algorithm achieves an $\mathcal{O}(T^{\max\{c,1-c\}})$ network regret bound and an $\mathcal{O}(T^{1-c/2})$ network cumulative constraint violation bound as achieved in \cite{yi2021regretTAC}, which generalizes the results in \cite{NIPS2018_7852,sun2017safety,yuan2021distributed,yuan2021distributedb} to more general settings,  and also improves the results in \cite{yuan2017adaptive}.

	\item When Slater's condition holds, we show in Theorem~\ref{online_op:corollaryreg_slater} that network cumulative constraint violation bound is reduced to $\mathcal{O}(T^{1-c})$, which generalizes the results in \cite{yu2017online,neely2017online}, and thus solves the open problem left in \cite{NIPS2018_7852}. To the best of our knowledge, this paper is the first to show a reduced (network) cumulative constraint violation bound can be achieved for (distributed) online convex optimization with adversarial constraints under Slater's condition.

	\item When the loss functions are strongly convex, we show in Theorem~\ref{online_op:corollaryreg_scmu} that the proposed algorithm achieves an $\mathcal{O}(\log(T))$ network regret bound and an $\mathcal{O}(\sqrt{\log(T)T})$ network cumulative constraint violation bound, which generalizes the results in \cite{NIPS2018_7852,yuan2021distributedb} and improves the results in \cite{yuan2017adaptive,yi2021regretTAC}. Moreover, if Slater's condition holds in addition, network cumulative constraint violation bound is reduced to $\mathcal{O}(\log(T))$. Again, to the best of our knowledge, it is the first time to achieve such a result.
\end{itemize}

The detailed comparison of this paper to related works is summarized in TABLE~\ref{online_op::table}.

\bgroup
\def\arraystretch{1.15}
\begin{table*}[!]
\caption{Comparison of this paper to related works on online convex optimization with long-term and adversarial constraints.}
\label{online_op::table}
%\vskip 0.15in
\centering
\small
%\begin{sc}
\begin{tabular}{M{1.1cm}|M{1.5cm}|M{1.8cm}|M{1.8cm}|M{1.2cm}|M{2.2cm}|M{1.6cm}|M{2.0cm}N}
\hline
Reference&Problem type&Loss functions&Constraint functions&Slater's condition&Regret&Constraint violation&Cumulative constraint violation&\\

\hline
\cite{sun2017safety}&Centralized&Convex&Convex&No&$\mathcal{O}(\sqrt{T})$& $\mathcal{O}(T^{3/4})$&Not given&\\

\hline
\cite{yu2017online,neely2017online}&Centralized&Convex
&Convex&Yes&$\mathcal{O}(\sqrt{T})$& $\mathcal{O}(\sqrt{T})$&Not given&\\

\hline
\multirow{2}{*}[-0.8em]{\cite{NIPS2018_7852}}&\multirow{2}{*}[-0.8em]{Centralized}&Convex
&\multirow{2}{*}[-0.8em]{\parbox{1.8cm}{\centering Convex, time-invariant}}&\multirow{2}{*}[-0.8em]{No}&$\mathcal{O}(T^{\max\{c,1-c\}})$& \multicolumn{2}{c}{$\mathcal{O}(T^{1-c/2})$}&\\
\cline{3-3}\cline{6-8}
&&Strongly convex&&&$\mathcal{O}(\log(T))$&\multicolumn{2}{c}{$\mathcal{O}(\sqrt{\log(T)T})$}&\\

\hline
\multirow{2}{*}[-0.8em]{\cite{yuan2017adaptive}}&\multirow{2}{*}[-0.8em]{Distributed}
&Convex&\multirow{2}{*}[-0.8em]{\parbox{1.8cm}{\centering Convex, time-invariant}}&\multirow{2}{*}[-0.8em]{No}&$\mathcal{O}(T^{\max\{0.5+\beta\}})$& $\mathcal{O}(T^{1-\beta/2})$&\multirow{2}{*}[-0.8em]{Not given}&\\
\cline{3-3}\cline{6-7}
&&Strongly convex&&&$\mathcal{O}(T^{c})$& $\mathcal{O}(T^{1-c/2})$&&\\

\hline
\cite{yuan2021distributed}&Distributed&Quadratic
&Linear, time-invariant&No&$\mathcal{O}(T^{\max\{c,1-c\}})$& \multicolumn{2}{c}{$\mathcal{O}(T^{1-c/2})$}&\\

\hline
\multirow{2}{*}[-0.8em]{\cite{yuan2021distributedb}}&\multirow{2}{*}[-0.8em]{Distributed}&Convex
&\multirow{2}{*}[-0.8em]{\parbox{1.8cm}{\centering Convex, time-invariant}}&\multirow{2}{*}[-0.8em]{No}&$\mathcal{O}(T^{\max\{c,1-c\}})$& \multicolumn{2}{c}{$\mathcal{O}(T^{1-c/2})$}&\\
\cline{3-3}\cline{6-8}
&&Strongly convex&&&$\mathcal{O}(\log(T))$&\multicolumn{2}{c}{$\mathcal{O}(\sqrt{\log(T)T})$}&\\

\hline
\multirow{2}{*}[-0.8em]{\cite{yi2021regretTAC}}&\multirow{2}{*}[-0.8em]{Distributed}&Convex
&\multirow{2}{*}[-0.8em]{Convex}&\multirow{2}{*}[-0.8em]{No}&$\mathcal{O}(T^{\max\{c,1-c\}})$& \multicolumn{2}{c}{\multirow{2}{*}[-0.8em]{$\mathcal{O}(T^{1-c/2})$}}&\\
\cline{3-3}\cline{6-6}
&&Strongly convex&&&$\mathcal{O}(T^c)$&\multicolumn{2}{c}{}&\\

\hline
\multirow{4}{*}{\parbox{1.1cm}{\centering This paper}}&\multirow{4}{*}{Distributed}&\multirow{2}{*}{Convex}
&\multirow{4}{*}{Convex}&No&\multirow{2}{*}{$\mathcal{O}(T^{\max\{c,1-c\}})$}& \multicolumn{2}{c}{$\mathcal{O}(T^{1-c/2})$}&\\
\cline{5-5}\cline{7-8}
&&&&Yes&& \multicolumn{2}{c}{$\mathcal{O}(T^{1-c})$}&\\
\cline{3-3}\cline{5-8}
&&\multirow{2}{*}{\parbox{1.5cm}{\centering Strongly convex}}&&No&\multirow{2}{*}{$\mathcal{O}(\log(T))$}& \multicolumn{2}{c}{$\mathcal{O}(\sqrt{\log(T)T})$}&\\
\cline{5-5}\cline{7-8}
&&&&Yes&& \multicolumn{2}{c}{$\mathcal{O}(\log(T))$}&\\
\hline
\end{tabular}
%\end{sc}
\vskip -0.1in
\end{table*}
\egroup

\noindent {\bf Outline}: The rest of this paper is organized as follows. Section~\ref{online_opsec:problem} formulates the considered problem. Section~\ref{online_opsec:algorithm} proposes the novel distributed online primal--dual composite mirror descent algorithm to solve the problem.  Section~\ref{online_opsec:main} analyzes the network regret and cumulative constraint violation bounds for the proposed algorithm. Section~\ref{online_opsec:simulation} gives numerical simulations. Finally, Section~\ref{online_opsec:conclusion} concludes the paper and proofs are given in Appendix.

\noindent {\bf Notations}: All inequalities and equalities throughout this paper are understood componentwise. $\mathbb{R}^n$ and $\mathbb{R}^n_+$ stand for the set of $n$-dimensional vectors and nonnegative vectors, respectively. $\mathbb{N}_+$ denotes the set of all positive integers. $[n]$ represents the set $\{1,\dots,n\}$ for any $n\in\mathbb{N}_+$. $\|\cdot\|$ ($\|\cdot\|_1$) stands for the Euclidean norm (1-norm) for vectors and the induced 2-norm (1-norm) for matrices. $x^\top$ denotes the transpose of a vector or a matrix. $\langle x,y\rangle$ represents the standard inner product of two vectors $x$ and $y$. ${\bf 0}_n$ (${\bf 1}_n$) denotes the column zero (one)
vector with dimension $n$. $\col(z_1,\dots,z_k)$ is the concatenated column vector of $z_i\in\mathbb{R}^{n_i},~i\in[k]$. For a closed convex set $\mathbb{K}\subseteq\mathbb{R}^p$ and any $x\in\Real^{p}$, $\calP_{\mathbb{K}}(x)$ is the projection of $x$ onto $\mathbb{K}$, i.e., $\calP_{\mathbb{K}}(x)=\argmin_{y\in\mathbb{K}}\|x-y\|^2$. For simplicity, $[x]_+$ is used to denote $\calP_{\mathbb{R}^p_+}(x)$. For a function $f$, let $\nabla  f(x)$ denote the (sub)gradient of $f$ at $x$. $\calU(a,b)$ is the uniform distribution over the interval $[a,b]$ with $a\le b\in\mathbb{R}$.

\section{Problem Formulation}\label{online_opsec:problem}
This paper studies distributed online convex optimization with adversarial constraints. Specifically, consider a network of $n$ agents indexed by $i\in[n]$. At each round $t$,  each agent $i$ makes a decision $x_{i,t}\in \mathbb{X}$, where $\mathbb{X}\subseteq\mathbb{R}^p$ is a known set and $p$ is a positive integer. After making the selection, the local loss function $l_{i,t}:\mathbb{R}^p\rightarrow \mathbb{R}$ and constraint function $g_{i,t}:\mathbb{R}^p\rightarrow \mathbb{R}^{m_i}$ are revealed to agent $i$ only, which respectively are a portion of the global loss function $l_t(x)=\frac{1}{n}\sum_{i=1}^nl_{i,t}(x)$ and a coordinate block of the global constraint function $g_t(x)=\col(g_{1,t}(x),\dots,g_{n,t}(x))$. Here, $m_i$ is a positive integer. The agents collaborate to select the decision sequences $\{x_{i,t}\}$ such that both network regret
\begin{align}\label{online_op:reg}
	\NetReg(T):=\frac{1}{n}\sum_{i=1}^{n}\Big(\sum_{t=1}^{T}l_t(x_{i,t})
	-\min_{x\in\mathcal{X}_{T}}\sum_{t=1}^{T}l_t(x)\Big)
\end{align}
and network cumulative constraint violation
\begin{align}\label{online_op:regc}
	\frac{1}{n}\sum_{i=1}^n\sum_{t=1}^T\|[g_{t}(x_{i,t})]_+\|
\end{align}
grow sublinearly, where $T\in\mathbb{N}_+$ is the number of rounds and
\begin{align*}
\mathcal{X}_{T}=\{x:~x\in \mathbb{X},~g_{t}(x)\le{\bf0}_{m},~
\forall t\in[T]\}
\end{align*} is the feasible set with $m=\sum_{i=1}^n m_i$. Similar to existing literature considering adversarial constraints, e.g., \cite{paternain2016online,sun2017safety,yu2017online,neely2017online,chen2017online,pmlr-v97-liakopoulos19a,sadeghi2019online,wei2020online,yi2021regretTAC}, it is assumed that the feasible set $\mathcal{X}_{T}$ is nonempty for every $T$.

Without loss of generality, we assume that each local loss function $l_{i,t}$ consists of a private part $f_{i,t}$ and a common part $r_{t}$, i.e., $l_{i,t}(x)=f_{i,t}(x)+r_{t}(x)$. Here, $r_{t}$ represents the common knowledge in the network. For example, $r_{t}$ could be the regularization used to influence the structure of the decisions. The above distributed problem setting incorporates various problems studied in the literature. For instance, when $g_{i,t}\equiv\bm{0}_{m_i},~\forall i\in[n],~t\in\mathbb{N}_+$, the above distributed problem becomes the problem studied in \cite{yuan2020distributed}; when  $g_{i,t}\equiv\bm{0}_{m_i}$ and $r_t\equiv0,~\forall i\in[n],~t\in\mathbb{N}_+$, the above distributed problem becomes the problem studied in various existing works, e.g., \cite{mateos2014distributed,koppel2015saddle,hosseini2016online,zhang2017projection,shahrampour2018distributed,akbari2018individual,wan2020projection,carnevale2020distributed,yuan2021distributed}; when $g_{i,t}\equiv g$ and $r_t\equiv0,~\forall i\in[n],t\in\mathbb{N}_+$ with $g$ being a known and pre-defined constraint function, the above distributed problem becomes the problem studied in \cite{yuan2017adaptive,yuan2021distributed,yuan2021distributedb}; and when $r_t\equiv0,~\forall t\in\mathbb{N}_+$, the above distributed problem becomes the problem studied in \cite{yi2021regretTAC}.
%Other forms of distributed variation of the centralized online convex optimization have also been considered, e.g., \cite{raginsky2011decentralized,lee2017stochastic,lee2017sublinear,Li2018distributed,yi2020distributed}, which are different from the distributed problem considered in this paper.

Some necessary definitions and assumptions are listed in the following.

\subsection{Graph Theory}
In this paper, the communication topology for the network of agents is  modeled by a time-varying directed graph. Specifically, let $\mathcal{G}_t=(\mathcal{V},\mathcal{E}_t)$ denote the directed graph at the $t$-th round, where $\mathcal{V}=[n]$ is the agent set and $\mathcal{E}_t\subseteq\mathcal{V}\times\mathcal{V}$ the edge set. A directed edge $(j,i)\in\mathcal{E}_t$ means that agent $i$ can receive data from agent $j$ at the $t$-th round. Let $\mathcal{N}^{\inout}_i(\mathcal{G}_t)=\{j\in [n]\mid (j,i)\in\mathcal{E}_t\}$ and $\mathcal{N}^{\outin}_i(\mathcal{G}_t)=\{j\in [n]\mid (i,j)\in\mathcal{E}_t\}$ be the sets of in- and out-neighbors, respectively, of agent $i$ at the $t$-th round. A directed path is a sequence of consecutive directed edges. A directed graph is strongly connected if there is at least one directed path
from any agent to any other agent. The associated adjacency (mixing) matrix $W_t\in\mathbb{R}^{n\times n}$  fulfills $[W_t]_{ij}>0$ if $(j,i)\in\mathcal{E}_t$ or $i=j$, and $[W_t]_{ij}=0$ otherwise.

\subsection{Bregman Divergence}

This paper uses the Bregman divergence \cite{bregman1967relaxation} to measure the distance of two points $x,~y\in\mathbb{X}$, which is defined as
\begin{align*}
	\calD_\psi(x,y)=\psi(x)-\psi(y)-\langle \nabla \psi(y),x-y\rangle,
\end{align*}
where $\psi:\mathbb{R}^p\rightarrow\mathbb{R}$ is a function which is strongly convex with convexity parameter $\sigma>0$ on the set $\mathbb{X}$, i.e.,
\begin{align*}
	\psi(x)\ge\psi(y)+\langle \nabla \psi(y),x-y\rangle+\frac{\sigma}{2}\|x-y\|^2.
\end{align*}
Thus,
\begin{align}\label{online_op:eqbergman}
	\calD_\psi(x,y)\ge\frac{\sigma}{2}\|x-y\|^2,~\forall x,y\in\mathbb{X}.
\end{align}
Moreover, $\calD_\psi(\:\cdot\:,y)$ is a strongly convex function with convexity parameter $\sigma$ for all fixed $y\in\mathbb{X}$.

Two well-known examples of Bregman divergence are the Euclidean distance $\calD_\psi(x,y)=\|x-y\|^2$ generated from $\psi(x)=\|x\|^2$, and the Kullback--Leibler (K--L) divergence $\calD_\psi(x,y)=-\sum_{i=1}^px_i\log(y_i/x_i)$ between two $p$-dimensional standard unit vectors (with $\mathbb{X}$ being the $p$-dimensional probability simplex) generated from $\psi(x)=\sum_{i=1}^px_i\log x_i-x_i$.

\subsection{Assumptions}
We make the following assumptions on the loss and constraint functions.
\begin{assumption}\label{online_op:assfunction}
	The set $\mathbb{X}$ is closed and convex. For all $i\in[n]$ and $t\in\mathbb{N}_+$, the functions $r_{t}$, $f_{i,t}$, and $g_{i,t}$ are convex.
\end{assumption}

\begin{assumption}\label{online_op:ass_ftgtupper}
There exists a positive constant $F$ such that for all $i\in[n]$, $t\in\mathbb{N}_+$, and $x,y\in\mathbb{X}$,
\begin{align}
|l_{i,t}(x)-l_{i,t}(y)|\le F.\label{online_op:ftgtupper}
\end{align}
\end{assumption}

\begin{assumption}\label{online_op:ass_subgradient}
For all $i\in[n]$, $t\in\mathbb{N}_+$, and $x\in\mathbb{X}$, the subgradients $\nabla r_{t}(x)$, $\nabla f_{i,t}(x)$, and $\nabla g_{i,t}(x)$ exist, moreover, there exist positive constants $G_1$ and $G_2$ such that
\begin{align}\label{online_op:subgupper}
\|\nabla l_{i,t}(x)\|\le G_1,~\|\nabla g_{i,t}(x)\|\le G_2.
\end{align}
\end{assumption}

Note that we do not assume that the local constraint functions $\{g_{i,t}\}$ are uniformly bounded, and the assumption that $\nabla l_{i,t}(x)$ is bounded is slightly weaker than the assumption that both $\nabla f_{i,t}(x)$ and $\nabla r_{t}(x)$ are bounded.
From Assumptions~\ref{online_op:assfunction} and \ref{online_op:ass_subgradient}, and Lemma~2.6 in \cite{shalev2012online}, it follows that for all $i\in[n],~t\in\mathbb{N}_+,~x,~y\in \mathbb{X}$,
\begin{subequations}
\begin{align}
&\left|l_{i,t}(x)-l_{i,t}(y)\right|\le G_1\|x-y\|,\label{online_op:assfunction:functionLipf}\\
&\left\|g_{i,t}(x)-g_{i,t}(y)\right\|\le G_2\|x-y\|.\label{online_op:assfunction:functionLipg}
\end{align}
\end{subequations}

The following commonly used assumption is made on the graph.
\begin{assumption}\label{online_op:assgraph}
	For any $t\in\mathbb{N}_+$, the directed graph $\mathcal{G}_t$ satisfies the following conditions:
	\begin{enumerate}[label=(\alph*)]
		\item There exists a constant $w\in(0,1)$, such that $[W_t]_{ij}\ge w$ if $[W_t]_{ij}>0$.
		\item The mixing matrix $W_t$ is doubly stochastic, i.e., $\sum_{i=1}^n[W_t]_{ij}=\sum_{j=1}^n[W_t]_{ij}=1,~\forall i,j\in[n]$.
		\item There exists an integer $B>0$ such that the directed graph $(\mathcal{V},\cup_{l=0}^{ B -1}\mathcal{E}_{t+l})$ is strongly connected.
	\end{enumerate}
\end{assumption}

Some assumptions on the Bergman divergence are stated as follows.

\begin{assumption}\label{online_op:assbregman}
	For any $x\in\mathbb{X}$, $\calD_\psi(x,\:\cdot\:):\mathbb{X}\rightarrow\mathbb{R}$ is convex.
\end{assumption}

\begin{assumption}\label{online_op:assbregmanlip}
	There exists a positive constant $K$ such that
\begin{align}\label{online_op:bregmanupp}
	\calD_\psi(x,y)\le K,~\forall x,y\in\mathbb{X}.
\end{align}
\end{assumption}
Assumption~\ref{online_op:assbregman} is satisfied for commonly used Bregman divergences,such as the Euclidean distance and the K--L divergence.
Assumption~\ref{online_op:assbregmanlip} is essentially employed to ensure that the set $\mathbb{X}$ has a bounded diameter. However, in certain scenarios, as demonstrated later, this assumption is eliminated.

The objective of this paper is to design an algorithm that can solve distributed online convex optimization with adversarial constraints, ensuring that both network regret and cumulative constraint violation bounds grow sublinearly under the aforementioned assumptions. More importantly, this paper aims to show reduced (network) cumulative constraint violation bounds can be achieved under Slater's condition, thereby addressing the open problem in the literature. We formally introduce Slater's condition as follows.

\begin{assumption}\label{online_op:assgt}
(Slater's condition) There exists a point $x_s\in \mathbb{X}$ and a constant $\epsilon_s>0$ such that
\begin{align}\label{online_op:gtcon}
g_{t}(x_s)\le-\epsilon_s{\bf 1}_{m},~t\in\mathbb{N}_+.
\end{align}
\end{assumption}

Slater's condition is a sufficient condition for strong duality to hold in convex optimization problems \cite{boyd2004convex}, which has also been utilized in online convex optimization problems to show reduced constraint violation bounds can be achieved, e.g., \cite{yu2020lowJMLR,yu2017online,neely2017online,yi2020distributed}. However, to the best of our knowledge, existing literature has not achieved reduced (network) cumulative constraint violation bounds under Slater's condition. The key reason for this is that existing literature studying (network) cumulative constraint violation, e.g., \cite{NIPS2018_7852,yuan2021distributed,yuan2021distributedb,yi2021regretTAC}, often replaces the original constraint function with the clipped constraint function, which makes Slater's condition ineffective.
Therefore, the main challenge lies in addressing the contradiction between the need for the clipping operation when using the stricter cumulative constraint violation metric, as defined in \eqref{online_op:regc1} and \eqref{online_op:regc}, and the fact that it renders Slater's condition ineffective.

\section{Algorithm Description}\label{online_opsec:algorithm}
In this section, we propose a novel algorithm for the distributed online convex optimization problem with adversarial constraints as introduced in the previous section, and analyze its performance in the next section.

Recall that at the $t$-th round, the global loss and constraint functions are $l_t$ and $g_t$, respectively. The associated regularized Lagrangian function is
\begin{align*}%\label{online_op:lagrangian}
	\calL_{t}(x_t,q_t)&:=\frac{1}{n}\sum_{i=1}^{n} f_{i,t}(x_t)+r_t(x_t)
	+q_t^\top g_t(x_t)-\frac{1}{2\gamma_{t}}\|q_t\|^2,
\end{align*}
where $x_t\in\mathbb{R}^p$ and $q_t\in\mathbb{R}^m_+$ represent the primal and dual variables, respectively, and $\gamma_{t}$ is the regularization parameter. Instead of using the projected gradient ascent
\begin{align*}
	q_{t+1}=\Big[q_t+\alpha_{t}\frac{\partial \calL_{t}(x_t,q)}{\partial q}\Big|_{q=q_t}\Big]_+
\end{align*}
to update the dual variable, where $\alpha_{t}>0$ is the stepsize, we update it by directly maximizing $\calL_{t}(x_t,q)$ over all $q\in\mathbb{R}^m_+$, i.e.,
\begin{align}\label{online_op:al_c_q}
	q_{t+1}=\argmax_{q\in\mathbb{R}^m_+} \calL_{t}(x_t,q)=\gamma_{t}[g_t(x_t)]_+,
\end{align}
which follows the idea of updating dual variables in \cite{NIPS2018_7852}. The same idea has also been adopted in \cite{yuan2021distributedb}. Moreover, instead of using the  projected gradient descent
\begin{align*}
	x_{t+1}=\calP_\mathbb{X}\Big(x_t-\alpha_{t}\frac{\partial \calL_{t}(x,q_{t+1})}{\partial x}\Big|_{x=x_t}\Big)
\end{align*}
to update the primal variable,  we update it as follows:
\begin{align}\label{online_op:al_c_x}
	x_{t+1}=\argmin_{x\in\mathbb{X}}\{&\alpha_{t}\langle x,\omega_{t+1}\rangle+\alpha_{t}r_{t}(x)+\calD_\psi(x,x_{t})\},
\end{align}
where
\begin{align*}
	\omega_{t+1}=\frac{1}{n}\sum_{i=1}^{n}\nabla f_{i,t}(x_{t})+(\nabla g_{t}(x_{t}))^\top q_{t+1}.
\end{align*}
The updating rule \eqref{online_op:al_c_x} is inspired by  the composite objective mirror descent in \cite{duchi2010composite}, which has also been adopted in \cite{yuan2020distributed,yi2020distributed}. In the following, we introduce how to implement the updating rules \eqref{online_op:al_c_q}--\eqref{online_op:al_c_x} in a distributed manner.

We use $x_{i,t}$ to denote the local copy of the primal variable $x_t$. If we rewrite the dual variable in an agent-wise manner, i.e.,
$q_t=\col(q_{1,t},\dots,q_{n,t})$
with each $q_{i,t}\in\mathbb{R}^{m_i}_+$, then the updating rule \eqref{online_op:al_c_q} can be executed in an agent-wise manner as \eqref{online_op:al_q}. Note that $\omega_{i,t+1}$ defined in \eqref{online_op:al_bigomega} can be understood as a portion of $\omega_{t+1}$ that is available to agent $i$. Then, each $z_{i,t+1}$ updated by \eqref{online_op:al_x} can be understood as a local estimate of $x_{t+1}$ updated by \eqref{online_op:al_c_x}. In this case, for each agent $i$, $x_{i,t+1}$ computed by the consensus protocol \eqref{online_op:al_z} is used to track the average  $\frac{1}{n}\sum_{i=1}^nz_{i,t+1}$, and thus estimates $x_{t+1}$ more accurately. As a result, the updating rules \eqref{online_op:al_c_q}--\eqref{online_op:al_c_x}  can be executed in a distributed manner, which is summarized in pseudo-code as Algorithm~\ref{online_op:algorithm}. This algorithm is called the distributed online primal--dual composite mirror descent algorithm.

\begin{algorithm}[tb]
\caption{Distributed Online Primal--Dual Composite Mirror Descent}
\begin{algorithmic}\label{online_op:algorithm}
\STATE \textbf{Input}:   nonincreasing sequence $\{\alpha_t>0\}$ and nondecreasing sequence $\{\gamma_t>0\}$; differentiable and strongly convex function $\psi$.
\STATE \textbf{Initialize}:  $z_{i,1}\in\mathbb{X}$ for all $i\in[n]$.
\FOR{$t=1,\dots$}
\FOR{$i=1,\dots,n$  in parallel}
\STATE  Broadcast $z_{i,t}$ to $\mathcal{N}^{\outin}_i(\mathcal{G}_{t})$ and receive $z_{j,t}$ from $j\in\mathcal{N}^{\inout}_i(\mathcal{G}_{t})$.
\STATE  Select
\begin{align}
x_{i,t}=\sum_{j=1}^n[W_{t}]_{ij}z_{j,t}.\label{online_op:al_z}
\end{align}
\STATE  Observe $\nabla f_{i,t}(x_{i,t})$, $\nabla g_{i,t}(x_{i,t})$, $g_{i,t}(x_{i,t})$, and $r_{t}(\:\cdot\:)$.
\STATE  Update
\begin{subequations}
\begin{align}
q_{i,t+1}&=\gamma_{t}[g_{i,t}(x_{i,t})]_+,\label{online_op:al_q}\\
\omega_{i,t+1}&=\nabla f_{i,t}(x_{i,t})+(\nabla g_{i,t}(x_{i,t}))^\top q_{i,t+1},\label{online_op:al_bigomega}\\
z_{i,t+1}&=\argmin_{x\in\mathbb{X}}\{\alpha_{t}\langle x,\omega_{i,t+1}\rangle+\alpha_{t}r_{t}(x)+\calD_\psi(x,x_{i,t})\}.\label{online_op:al_x}
\end{align}
\end{subequations}
\ENDFOR
\ENDFOR
\STATE  \textbf{Output}: $\{x_{i,t}\}$.
\end{algorithmic}
\end{algorithm}

%Algorithm~\ref{online_op:algorithm} can be recognized as an extension of the decentralized online mirror descent in \cite{shahrampour2018distributed} for time-varying inequality constraints and balanced directed time-varying communication graphs. Algorithm~\ref{online_op:algorithm} can also be recognized as the distributed variant of the algorithm in \cite{chen2017online,neely2017online}.

The minimization problem (\ref{online_op:al_x}) is strongly convex, so it can be solved with a linear convergence rate and closed-form solutions are available in special cases. For example, if $r_t$ is a linear mapping and the Euclidean distance is used as the Bregman distance,  i.e., $\calD_\psi(x,y)=\|x-y\|^2$, then the convex minimization problem (\ref{online_op:al_x}) can be solved by the projection
\begin{align*}
	z_{i,t+1}=\calP_\mathbb{X}\Big(x_{i,t}-\frac{\alpha_{t}}{2}(\omega_{i,t+1}+\nabla r_{t})\Big).
\end{align*}

To end this section, we would like to emphasize the key novelty of the proposed algorithm. Different from previous approaches in \cite{NIPS2018_7852,yuan2021distributed,yuan2021distributedb,yi2021regretTAC}, our algorithm does not simply replace the original constraint function with the clipped constraint function. Instead, it utilizes the clipped constraint function solely for updating the dual variables, derived from directly maximizing the regularized Lagrangian function, see \eqref{online_op:al_c_q} and \eqref{online_op:al_q}. Although this may seem like a minor difference, it guarantees the achievement of state-of-the-art (network) cumulative constraint violation bounds without Slater's condition. More importantly, it serves as the key to solving the open problem of achieving reduced (network) cumulative constraint violation bounds under Slater's condition through properly designed analysis, since it enables the use of the stricter cumulative constraint violation metric while preserving the effectiveness of Slater's condition. The detailed explanations are elaborated in the next section.

\section{Performance Analysis}\label{online_opsec:main}
This section analyzes network regret and cumulative constraint violation bounds for Algorithm~\ref{online_op:algorithm} under different scenarios.

\subsection{Preliminary Results}
We first bound local regret and (squared) cumulative constraint violation, the accumulated (squared) consensus error, and the changes caused by composite mirror descent in the following.
\begin{lemma}\label{online_op:theoremreg-local}
Suppose Assumptions~\ref{online_op:assfunction}--\ref{online_op:assbregman} hold. For all $i\in[n]$, let $\{x_{i,t}\}$ be the sequences generated by Algorithm~\ref{online_op:algorithm} with $\gamma_t=\gamma_0/\alpha_t$, where $\gamma_0\in(0,\sigma/(4G_2^2)]$ is a constant. Then, for any $T\in\mathbb{N}_+$,
\begin{subequations}
\begin{align}
&\frac{1}{n}\sum_{i=1}^{n}\sum_{t=1}^{T}\Big(l_{i,t}(x_{i,t})-l_{i,t}(y)+\frac{\sigma\|\epsilon^z_{i,t}\|^2}{4\alpha_{t}}\Big)\le \sum_{t=1}^{T}\frac{2G_1^2\alpha_{t}}{\sigma}+\frac{1}{ n}\sum_{t=1}^{T}\sum_{i=1}^n\Delta_{i,t}(y),~\forall y\in\mathcal{X}_T,
\label{online_op:theoremregequ-local}\\
&\sum_{t=1}^T\sum_{i=1}^n\frac{1}{2}\Big(\frac{q_{i,t+1}^\top g_{i,t}(x_{i,t})}{\gamma_t}+\frac{\sigma\|\epsilon^z_{i,t}\|^2}{2\gamma_0}\Big)\le h_T(y)+\tilde{h}_T(y),~\forall y\in\mathcal{X}_T,\label{online_op:theoremregequ2_g2}\\
&\frac{1}{n}\sum_{t=1}^{T}\sum_{i=1}^n\sum_{j=1}^n\|x_{i,t}-x_{j,t}\|
\le n\varepsilon_1+\tilde{\varepsilon}_2\sum_{t=1}^{T}\sum_{i=1}^n
\|\epsilon^z_{i,t}\|,\label{online_op:xitxbarT2_slater}\\
&\frac{1}{n}\sum_{t=1}^{T}\sum_{i=1}^n\sum_{j=1}^n\|x_{i,t}-x_{j,t}\|^2\
\le \tilde{\varepsilon}_3+\tilde{\varepsilon}_4\sum_{t=1}^T\sum_{i=1}^n\|\epsilon^z_{i,t}\|^2,\label{online_op:xitxbargsquar}\\
&\|\epsilon^z_{i,t}\|\le\frac{1}{\sigma}(G_2\gamma_{0}\|[g_{i,t}(x_{i,t})]_+\|+G_1\alpha_{t}),\label{online_op:xxt}
\end{align}
\end{subequations}
where
\begin{align*}
&\Delta_{i,t}(y)=\frac{1}{\alpha_{t}}(\calD_\psi(y,x_{i,t})
-\calD_\psi(y,x_{i,t+1})),~\epsilon^z_{i,t}=z_{i,t+1}-x_{i,t},\\
&h_T(y)=\sum_{i=1}^n\sum_{t=1}^T\frac{q_{i,t+1}^\top g_{i,t}(y)}{\gamma_t},
~\tilde{h}_T(y)=\sum_{t=1}^T\frac{nF}{\gamma_{t}}+\sum_{t=1}^T\frac{2n\gamma_0G_1^2}{\sigma\gamma_{t}^2}+\sum_{i=1}^n\frac{\calD_\psi(y,x_{i,1})}{\gamma_0},\\
&\varepsilon_1=\frac{2\tau}{\lambda(1-\lambda)}\sum_{i=1}^n\|z_{i,1}\|,~\tilde{\varepsilon}_2=\frac{2(n\tau +2-2\lambda)}{1-\lambda},~\tilde{\varepsilon}_3=\frac{16n\tau^2}{\lambda^{2}(1-\lambda^{2})}\Big(\sum_{i=1}^n\|z_{i,1}\|\Big)^2,\\
&\tilde{\varepsilon}_4=\frac{16n^2\tau^2}{(1-\lambda)^2}+32,~\tau=(1-w/4n^2)^{-2}>1,~\lambda=(1-w/4n^2)^{1/ B }\in(0,1).
\end{align*}
\end{lemma}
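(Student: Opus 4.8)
\emph{Strategy.} All five estimates can be read off from one object: the first-order optimality condition of the strongly convex subproblem \eqref{online_op:al_x}, combined with the three-point identity $\langle \nabla\psi(z)-\nabla\psi(x),\,y-z\rangle = \calD_\psi(y,x)-\calD_\psi(y,z)-\calD_\psi(z,x)$; the consensus estimates \eqref{online_op:xitxbarT2_slater}--\eqref{online_op:xitxbargsquar} instead come from the mixing properties of $\{W_t\}$. I would start with the single-step bound \eqref{online_op:xxt}. Since $z_{i,t+1}$ minimizes the $\sigma$-strongly convex map $x\mapsto \alpha_t\langle x,\omega_{i,t+1}\rangle+\alpha_t r_t(x)+\calD_\psi(x,x_{i,t})$ over $\mathbb{X}$, testing optimality at the feasible point $x_{i,t}$ gives, for some subgradient $\nabla r_t(z_{i,t+1})$,
\begin{align*}
\langle \alpha_t\omega_{i,t+1}+\alpha_t\nabla r_t(z_{i,t+1})+\nabla\psi(z_{i,t+1})-\nabla\psi(x_{i,t}),\,x_{i,t}-z_{i,t+1}\rangle\ge 0.
\end{align*}
Strong convexity of $\psi$ lower-bounds the $\nabla\psi$ inner product by $\sigma\|\epsilon^z_{i,t}\|^2$, and convexity of $r_t$ lets me replace $\nabla r_t(z_{i,t+1})$ by $\nabla r_t(x_{i,t})$ in the favorable direction. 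Since $\omega_{i,t+1}+\nabla r_t(x_{i,t})=\nabla l_{i,t}(x_{i,t})+(\nabla g_{i,t}(x_{i,t}))^\top q_{i,t+1}$, Assumption~\ref{online_op:ass_subgradient} and $\|q_{i,t+1}\|=\gamma_t\|[g_{i,t}(x_{i,t})]_+\|$ yield $\sigma\|\epsilon^z_{i,t}\|\le \alpha_t(G_1+G_2\|q_{i,t+1}\|)$, which is \eqref{online_op:xxt} after $\gamma_t=\gamma_0/\alpha_t$.

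\emph{Regret and constraint estimates.} Testing the same optimality condition against an arbitrary $y\in\mathcal{X}_T$, applying the three-point identity, and using convexity of $f_{i,t},r_t$, convexity of $g_{i,t}$ with $q_{i,t+1}\ge\bm{0}$, and \eqref{online_op:eqbergman}, I would reach the master per-agent inequality
\begin{align*}
\alpha_t(l_{i,t}(x_{i,t})-l_{i,t}(y))+\alpha_t q_{i,t+1}^\top(g_{i,t}(x_{i,t})-g_{i,t}(y))+\frac{\sigma}{2}\|\epsilon^z_{i,t}\|^2\le \calD_\psi(y,x_{i,t})-\calD_\psi(y,z_{i,t+1})+\alpha_t(G_1+G_2\|q_{i,t+1}\|)\|\epsilon^z_{i,t}\|,
\end{align*}
where the cross term is the quantity already bounded for \eqref{online_op:xxt}, $q_{i,t+1}^\top g_{i,t}(x_{i,t})=\gamma_t\|[g_{i,t}(x_{i,t})]_+\|^2$ by $a^\top[a]_+=\|[a]_+\|^2$, and $-q_{i,t+1}^\top g_{i,t}(y)\ge0$ since $g_{i,t}(y)\le\bm{0}$. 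Dividing by $\alpha_t$, discarding the nonnegative $-q_{i,t+1}^\top g_{i,t}(y)$ but retaining $\gamma_t\|[g_{i,t}(x_{i,t})]_+\|^2$, Young's inequality uses that retained term together with $\gamma_0\le\sigma/(4G_2^2)$ to cancel the $G_2$-component of the cross term and absorbs the $G_1$-component into $\frac{\sigma}{4\alpha_t}\|\epsilon^z_{i,t}\|^2$ plus the constant $\frac{2G_1^2\alpha_t}{\sigma}$, giving \eqref{online_op:theoremregequ-local}. Dividing the same inequality by $\gamma_0=\alpha_t\gamma_t$ instead keeps $q_{i,t+1}^\top g_{i,t}(x_{i,t})/\gamma_t=\|[g_{i,t}(x_{i,t})]_+\|^2$ as the target quantity (its $G_2$-cross-component again soaked up via $\gamma_0\le\sigma/(4G_2^2)$), turns $|l_{i,t}(x_{i,t})-l_{i,t}(y)|\le F$ (Assumption~\ref{online_op:ass_ftgtupper}) into $\sum_t nF/\gamma_t$, the $G_1$-component into $\sum_t 2n\gamma_0G_1^2/(\sigma\gamma_t^2)$, and $q_{i,t+1}^\top g_{i,t}(y)/\gamma_t$ into $h_T(y)$, yielding \eqref{online_op:theoremregequ2_g2}. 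In both cases, before telescoping I would sum over $i$ and invoke Assumption~\ref{online_op:assbregman} with the double stochasticity of $W_{t+1}$ to obtain $\sum_i\calD_\psi(y,x_{i,t+1})\le\sum_i\calD_\psi(y,z_{i,t+1})$, replacing $z_{i,t+1}$ by $x_{i,t+1}$ in the Bregman terms; the telescoping then produces the $\Delta_{i,t}(y)$ sum and the $\sum_i\calD_\psi(y,x_{i,1})/\gamma_0$ term.

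\emph{Consensus estimates.} For \eqref{online_op:xitxbarT2_slater}--\eqref{online_op:xitxbargsquar} I would write the coupled recursion $z_{i,t+1}=\sum_j[W_t]_{ij}z_{j,t}+\epsilon^z_{i,t}$ and unroll it through the transition matrices $W_t W_{t-1}\cdots$. Under Assumption~\ref{online_op:assgraph} the standard geometric mixing estimate for products of doubly stochastic matrices gives $|[\,W_t\cdots W_{s+1}]_{ij}-1/n|\le \tau\lambda^{t-s}$, with $\tau,\lambda$ as defined. Expressing $x_{i,t}-x_{j,t}$ as a weighted combination of the initial states $z_{\cdot,1}$ and the increments $\epsilon^z_{\cdot,s}$ whose coefficients are controlled by $\tau\lambda^{t-s}$, taking norms, summing over $t$, swapping the order of summation, and bounding $\sum_t\lambda^{t-s}\le 1/(1-\lambda)$ yields \eqref{online_op:xitxbarT2_slater}; the squared version \eqref{online_op:xitxbargsquar} follows the same route with the weighted Cauchy--Schwarz step $(\sum_s\lambda^{t-s}a_s)^2\le \frac{1}{1-\lambda}\sum_s\lambda^{t-s}a_s^2$ applied before summing, which accounts for the extra $\lambda$-dependence in $\tilde\varepsilon_3,\tilde\varepsilon_4$.

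\emph{Main obstacle.} The delicate point is the joint treatment of the cross term $\alpha_t(G_1+G_2\|q_{i,t+1}\|)\|\epsilon^z_{i,t}\|$ when deriving \eqref{online_op:theoremregequ-local} and \eqref{online_op:theoremregequ2_g2} from the single master inequality: the same term must be absorbed in two different ways—into a pure $\|\epsilon^z_{i,t}\|^2$ budget for the regret bound and partly into the $\|[g_{i,t}(x_{i,t})]_+\|^2$ budget for the constraint bound—which is exactly what forces the calibration $\gamma_0\in(0,\sigma/(4G_2^2)]$ and relies on the identity $q_{i,t+1}^\top g_{i,t}(x_{i,t})=\gamma_t\|[g_{i,t}(x_{i,t})]_+\|^2$ rather than on clipping $g_{i,t}$, the very feature that keeps Slater's condition usable in the subsequent theorems.
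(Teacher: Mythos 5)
Your proposal is correct and takes essentially the same route as the paper's proof: your master per-agent inequality is exactly the paper's per-round bound (Lemma~\ref{online_op:lemma_regretdelta} multiplied by $\alpha_t$), and your two Young-inequality calibrations---against the $\sigma/\alpha_t$ budget for \eqref{online_op:theoremregequ-local}, and against the $\gamma_0$ budget while retaining $q_{i,t+1}^\top g_{i,t}(x_{i,t})/\gamma_t=\|[g_{i,t}(x_{i,t})]_+\|^2$ for \eqref{online_op:theoremregequ2_g2}---together with the conversion of $\sum_{i}\calD_\psi(y,z_{i,t+1})$ into $\sum_i\calD_\psi(y,x_{i,t+1})$ via Assumption~\ref{online_op:assbregman} and double stochasticity, reproduce the paper's parts (i)--(ii). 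The consensus and single-step estimates likewise coincide with the paper's parts (iii)--(v), the only cosmetic difference being that you re-derive the geometric mixing bound and the regularized Bregman projection inequality that the paper imports from cited lemmas.
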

\begin{proof}
	See Appendix~\ref{online_op:theoremregproof-local}.
\end{proof}

As mentioned in the previous section, the key innovation of Algorithm~\ref{online_op:algorithm} lies in the utilization of the clipped constraint function solely for updating the dual variables, rather than directly replacing the original constraint function.
With Lemma~\ref{online_op:theoremreg-local}, in the subsequent discussion, we will demonstrate how to obtain network regret and cumulative constraint violation bounds for Algorithm~\ref{online_op:algorithm}. We will specifically highlight the significance of the aforementioned novelty in achieving (network) cumulative constraint violation bounds in both scenarios: without and with Slater's condition. Additionally, we will provide an elucidation of why (network) cumulative constraint violation bounds can be reduced under Slater's condition.

Firstly, noting that \eqref{online_op:theoremregequ-local} and \eqref{online_op:xitxbarT2_slater} respectively provide an upper bound for local regret and the accumulated consensus error, an upper bound for network regret can be derived by combining \eqref{online_op:theoremregequ-local}, \eqref{online_op:xitxbarT2_slater}, and \eqref{online_op:assfunction:functionLipf}.

Secondly, noting that
\begin{align}
\frac{q_{i,t+1}^\top g_{i,t}(x_{i,t})}{\gamma_t}=[g_{i,t}(x_{i,t})]_+^\top g_{i,t}(x_{i,t})=\|g_{i,t}(x_{i,t})\|^2\label{online_op:qibound_ab}
\end{align}
due to \eqref{online_op:al_q} and the fact that $a^\top[a]_+=\|[a]_+\|^2$ for any vector $a$, we know that \eqref{online_op:theoremregequ2_g2} provides an upper bound for local squared cumulative constraint violation. Moreover, note that \eqref{online_op:xitxbargsquar} provides an upper bound for the accumulated squared consensus error, which is $\mathcal{O}(\tilde{h}_T(y))$ since $h_T(y)\le0,~\forall y\in\mathcal{X}_T$. Then, we can get an upper bound for network squared cumulative constraint violation by combining \eqref{online_op:theoremregequ2_g2}, \eqref{online_op:xitxbargsquar}, and \eqref{online_op:assfunction:functionLipg}. As a result, an $\mathcal{O}((T\tilde{h}_T(y))^{1/2})$ bound for network cumulative constraint violation can be derived since
\begin{align}\label{online_op:constrain_vio_def}
\Big(\frac{1}{n}\sum_{i=1}^n\sum_{t=1}^T\|[g_{t}(x_{i,t})]_+\|\Big)^2
\le\frac{T}{n}\sum_{i=1}^n\sum_{t=1}^T\|[g_{t}(x_{i,t})]_+\|^2,
\end{align}
which holds due to the H\"{o}lder's inequality.

Thirdly, when Slater's condition holds, noting that
\begin{align}
h_T(x_s)&=\sum_{i=1}^n\sum_{t=1}^T\frac{q_{i,t+1}^\top g_{i,t}(x_s)}{\gamma_t}
=\sum_{i=1}^n\sum_{t=1}^T[g_{i,t}(x_{i,t})]_+^\top g_{i,t}(x_s)\nonumber\\
&\le-\sum_{i=1}^n\sum_{t=1}^T\epsilon_s[g_{i,t}(x_{i,t})]_+^\top {\bf 1}_{m_i}
=-\epsilon_s\sum_{i=1}^n\sum_{t=1}^T\|[g_{i,t}(x_{i,t})]_+\|_1\nonumber\\
&\le-\epsilon_s\sum_{i=1}^n\sum_{t=1}^T\|[g_{i,t}(x_{i,t})]_+\|,\label{online_op:gtcon_hT}
\end{align}
where the second equality holds due to \eqref{online_op:al_q} and the first inequality holds due to \eqref{online_op:gtcon}, we know that \eqref{online_op:theoremregequ2_g2} provides an upper bound for local cumulative constraint violation, which is $\mathcal{O}(\tilde{h}_T(x_s))$. On the other hand, from \eqref{online_op:xitxbarT2_slater}, \eqref{online_op:assfunction:functionLipg}, and \eqref{online_op:xxt}, we know that network cumulative constraint violation can be bounded by local cumulative constraint violation. As a result, we can get an $\mathcal{O}(\tilde{h}_T(x_s))$ bound for network cumulative constraint violation.

%It should be pointed out that when Slater's condition holds the upper bound for local cumulative constraint violation provided by \eqref{online_op:theoremregequ2_g2} is determined by $\tilde{h}_T(x_s)$, which has the same order with respect to $T$ as the upper bound for local squared cumulative constraint violation without Slater's condition. Therefore,  the upper bounds for (network) cumulative constraint violation under Slater's condition and (network)  squared cumulative constraint violation without Slater's condition have the same order with respect to $T$. On the other hand, from \eqref{online_op:constrain_vio_def} we know that the upper bounds for (network) cumulative constraint violation without Slater's condition have larger order with respect to $T$ than (network)  squared cumulative constraint violation. Consequently, reduced (network) cumulative constraint violation bounds can be achieved under Slater's condition.
Note that $\mathcal{O}(\tilde{h}_T(x_s))$, the upper bound for network cumulative constraint violation under Slater's condition, has the same order with respect to $T$ as $\mathcal{O}(\tilde{h}_T(y))$, and thus it has smaller order with respect to $T$ than $\mathcal{O}((T\tilde{h}_T(y))^{1/2})$, the upper bound for (network) cumulative constraint violation without Slater's condition, since it can be guaranteed that $\mathcal{O}(\tilde{h}_T(y))=\mathbf{o}(T)$ by appropriately selecting the stepsize sequence $\{\alpha_t\}$. Therefore, reduced (network) cumulative constraint violation bounds can be achieved under Slater's condition.

With the explanations above,  network regret and cumulative constraint violation bounds for the general cases are provided in the following lemma.
\begin{lemma}\label{online_op:theoremreg}
Under the same condition as stated in Lemma~\ref{online_op:theoremreg-local}, for any $T\in\mathbb{N}_+$, it holds that
\begin{subequations}
\begin{align}
&\frac{1}{n}\sum_{i=1}^{n}\sum_{t=1}^{T}l_{t}(x_{i,t})-\sum_{t=1}^{T}l_{t}(y)
\le \varepsilon_1G_1+\sum_{t=1}^{T}\varepsilon_2\alpha_{t}+\frac{1}{ n}\sum_{t=1}^{T}\sum_{i=1}^n\Delta_{i,t}(y),~\forall y\in\mathcal{X}_T,
\label{online_op:theoremregequ}\\
&\frac{1}{n}\sum_{i=1}^n\sum_{t=1}^T\|[g_{t}(x_{i,t})]_+\|\le\sqrt{\varepsilon_3T+\varepsilon_4T\tilde{h}_T(y)},~\forall y\in\mathcal{X}_T,\label{online_op:theoremconsequ}\\
&\frac{1}{n}\sum_{i=1}^n\sum_{t=1}^T\|[g_{t}(x_{i,t})]_+\|
\le n\varepsilon_1G_2+\varepsilon_{5}\sum_{t=1}^T\alpha_t+ \varepsilon_{6}\sum_{t=1}^T\sum_{i=1}^n\|[g_{i,t}(x_{i,t})]_+\|,\label{online_op:lemma-g-slater}
\end{align}
\end{subequations}
where
\begin{align*}
&\varepsilon_2=\frac{(\tilde{\varepsilon}_2^2+2)G_1^2}{\sigma},
~\varepsilon_3=2G_2^2\tilde{\varepsilon}_3,~\varepsilon_4=\frac{4\max\{1,G_2^2\tilde{\varepsilon}_4\}}{\min\{1,\frac{\sigma}{2\gamma_0}\}},~\varepsilon_{5}=\frac{n\tilde{\varepsilon}_2G_1G_2}{\sigma },~\varepsilon_{6}=\frac{\tilde{\varepsilon}_2G_2^2\gamma_{0}+\sigma}{\sigma}.
\end{align*}
\end{lemma}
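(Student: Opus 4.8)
The plan is to establish the three inequalities \eqref{online_op:theoremregequ}, \eqref{online_op:theoremconsequ}, and \eqref{online_op:lemma-g-slater} one at a time. In every case the idea is the same: write the network-level quantity on the left as a \emph{local} part, which is directly controlled by Lemma~\ref{online_op:theoremreg-local}, plus a \emph{consensus} part measuring the disagreement $\|x_{i,t}-x_{j,t}\|$ among agents, and then feed the consensus part into the consensus bounds \eqref{online_op:xitxbarT2_slater}--\eqref{online_op:xitxbargsquar}, closing the argument by absorbing leftover $\|\epsilon^z_{i,t}\|$ terms. Fix an arbitrary $y\in\mathcal{X}_T$ throughout.

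For the regret bound \eqref{online_op:theoremregequ}, I would expand $l_t=\frac1n\sum_j l_{j,t}$ and split $\frac1n\sum_i\sum_t l_t(x_{i,t})-\sum_t l_t(y) = \frac1{n^2}\sum_{i,j,t}\bigl(l_{j,t}(x_{i,t})-l_{j,t}(x_{j,t})\bigr) + \frac1n\sum_{j,t}\bigl(l_{j,t}(x_{j,t})-l_{j,t}(y)\bigr)$. The second sum is, after rearranging \eqref{online_op:theoremregequ-local}, at most $\sum_t\frac{2G_1^2\alpha_t}{\sigma}+\frac1n\sum_{t,i}\Delta_{i,t}(y)-\frac1n\sum_{t,i}\frac{\sigma}{4\alpha_t}\|\epsilon^z_{i,t}\|^2$, where I keep the negative quadratic terms. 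The first sum is at most $\frac{G_1}{n}\cdot\frac1n\sum_{t,i,j}\|x_{i,t}-x_{j,t}\|$ by \eqref{online_op:assfunction:functionLipf}, hence at most $G_1\varepsilon_1+\frac{G_1\tilde\varepsilon_2}{n}\sum_{t,i}\|\epsilon^z_{i,t}\|$ by \eqref{online_op:xitxbarT2_slater}. The linear remainder is absorbed against the kept quadratic terms via Young's inequality, $G_1\tilde\varepsilon_2 b-\frac{\sigma}{4\alpha_t}b^2\le \frac{G_1^2\tilde\varepsilon_2^2\alpha_t}{\sigma}$ with $b=\|\epsilon^z_{i,t}\|$; summing over $i$ and combining with $\frac{2G_1^2\alpha_t}{\sigma}$ yields precisely the coefficient $\varepsilon_2=\frac{(\tilde\varepsilon_2^2+2)G_1^2}{\sigma}$.

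For the violation bound \eqref{online_op:theoremconsequ}, the key starting point is that $h_T(y)\le 0$ for $y\in\mathcal{X}_T$ (since $q_{i,t+1}\ge 0$ and $g_{i,t}(y)\le 0$), so \eqref{online_op:theoremregequ2_g2} with identity \eqref{online_op:qibound_ab} yields the coupled bound $S+\frac{\sigma}{2\gamma_0}E\le 2\tilde h_T(y)$, where $S:=\sum_{t,i}\|[g_{i,t}(x_{i,t})]_+\|^2$ and $E:=\sum_{t,i}\|\epsilon^z_{i,t}\|^2$. I would then write $\frac1n\sum_{i,t}\|[g_t(x_{i,t})]_+\|^2=\frac1n\sum_{i,t,j}\|[g_{j,t}(x_{i,t})]_+\|^2$ and, using nonexpansiveness of $[\cdot]_+$, \eqref{online_op:assfunction:functionLipg}, and $(a+b)^2\le 2a^2+2b^2$, split it into $2S$ plus $2G_2^2\cdot\frac1n\sum_{t,i,j}\|x_{i,t}-x_{j,t}\|^2$, bounding the latter by $2G_2^2(\tilde\varepsilon_3+\tilde\varepsilon_4 E)$ through \eqref{online_op:xitxbargsquar}. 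Using $2S+2G_2^2\tilde\varepsilon_4 E\le 2\max\{1,G_2^2\tilde\varepsilon_4\}(S+E)$ together with $S+E\le 2\tilde h_T(y)/\min\{1,\tfrac{\sigma}{2\gamma_0}\}$ gives $\frac1n\sum_{i,t}\|[g_t(x_{i,t})]_+\|^2\le \varepsilon_3+\varepsilon_4\tilde h_T(y)$, and the H\"older inequality \eqref{online_op:constrain_vio_def} delivers the square-root form.

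Finally, for \eqref{online_op:lemma-g-slater} I would bound $\|[g_t(x_{i,t})]_+\|\le\sum_{j}\|[g_{j,t}(x_{i,t})]_+\|$ ($\ell_2\le\ell_1$ across blocks), split each block by the same nonexpansiveness/Lipschitz step into $\|[g_{j,t}(x_{j,t})]_+\|+G_2\|x_{i,t}-x_{j,t}\|$, and after summing, dividing by $n$, and applying \eqref{online_op:xitxbarT2_slater}, obtain a local-CCV term, the constant $nG_2\varepsilon_1$, and a leftover $G_2\tilde\varepsilon_2\sum_{t,i}\|\epsilon^z_{i,t}\|$. Substituting the pointwise estimate \eqref{online_op:xxt} converts this leftover into a multiple of $\sum_{t,i}\|[g_{i,t}(x_{i,t})]_+\|$ (yielding $\varepsilon_6=\frac{\tilde\varepsilon_2 G_2^2\gamma_0+\sigma}{\sigma}$) plus a multiple of $\sum_t\alpha_t$ (yielding $\varepsilon_5$). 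I expect the main obstacle to be the constant-matching in the second part: choosing the $\max$/$\min$ splitting so that the coupled inequality $S+\frac{\sigma}{2\gamma_0}E\le 2\tilde h_T(y)$ is exploited tightly enough to reproduce exactly $\varepsilon_4$. Conceptually, however, every step reduces to an elementary application of Young's or H\"older's inequality, the nonexpansiveness of $[\cdot]_+$, and the preassembled estimates of Lemma~\ref{online_op:theoremreg-local}.
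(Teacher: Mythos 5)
Your proposal is correct and follows essentially the same route as the paper's proof: part (i) uses the same Lipschitz splitting of $l_t$ plus \eqref{online_op:xitxbarT2_slater} and Young's absorption against the retained quadratic terms of \eqref{online_op:theoremregequ-local}; part (ii) uses $h_T(y)\le 0$, the identity \eqref{online_op:qibound_ab}, nonexpansiveness of $[\:\cdot\:]_+$ with \eqref{online_op:xitxbargsquar}, the $\max/\min$ constant matching, and H\"older's inequality \eqref{online_op:constrain_vio_def}; part (iii) uses the block-wise $\ell_2\le\ell_1$ bound, \eqref{online_op:xitxbarT2_slater}, and the pointwise estimate \eqref{online_op:xxt}. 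All constants $\varepsilon_2$--$\varepsilon_6$ come out exactly as in the statement, so the construction is complete as written.
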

\begin{proof}
	See Appendix~\ref{online_op:theoremregproof}.
\end{proof}

%\subsection{Network Regret and Cumulative Constraint Violation Bounds}
\subsection{Convex Scenario}
%Note that the dependence on the network parameters, such as the number of agents and the communication network connectivity, is characterized in \eqref{online_op:theoremregequ} and \eqref{online_op:theoremconsequ}.
In this section, we show that network regret and cumulative constraint violation bounds grow sublinearly if the natural vanishing stepsize sequence is used.

\begin{theorem}\label{online_op:corollaryreg}
Suppose Assumptions~\ref{online_op:assfunction}--\ref{online_op:assbregmanlip} hold. For all $i\in[n]$, let $\{x_{i,t}\}$ be the sequences generated by Algorithm~\ref{online_op:algorithm} with
\begin{align}\label{online_op:stepsize1}
\alpha_t=\frac{1}{t^{c}},~\gamma_t=\frac{\gamma_0}{\alpha_t},~\forall t\in\mathbb{N}_+,
\end{align} where $c\in(0,1)$ and $\gamma_0\in(0,\sigma/(4G_2^2)]$ are constants. Then, for any $T\in\mathbb{N}_+$,
\begin{subequations}\label{online_op:corollaryreg-bound}
\begin{align}
&\NetReg(T)=\mathcal{O}(T^{\max\{c,1-c\}}),\label{online_op:corollaryregequ1}\\
&\frac{1}{n}\sum_{i=1}^n\sum_{t=1}^T\|[g_{t}(x_{i,t})]_+\|=\mathcal{O}(T^{1-c/2}).
\label{online_op:corollaryconsequ}
\end{align}
\end{subequations}
\end{theorem}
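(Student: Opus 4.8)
The plan is to invoke Lemma~\ref{online_op:theoremreg} directly with the prescribed stepsizes $\alpha_t=t^{-c}$ and $\gamma_t=\gamma_0 t^{c}$, fix an arbitrary feasible comparator $y\in\mathcal{X}_T$ (which exists since $\mathcal{X}_T$ is assumed nonempty), and then reduce both bounds to estimating a handful of power sums of the form $\sum_{t=1}^T t^{-a}$. Since $\varepsilon_1,\dots,\varepsilon_6$ and $\tilde\varepsilon_2,\tilde\varepsilon_3,\tilde\varepsilon_4$ are constants independent of $T$, the $T$-dependence of both bounds is governed entirely by these sums.

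For the regret bound \eqref{online_op:corollaryregequ1}, I would take $y$ to be a minimizer of $\sum_{t=1}^T l_t(\:\cdot\:)$ over $\mathcal{X}_T$, so that the left-hand side of \eqref{online_op:theoremregequ} becomes exactly $\NetReg(T)$. The three terms on the right are handled separately: the constant $\varepsilon_1 G_1$ is $\mathcal{O}(1)$; the stepsize sum obeys $\sum_{t=1}^T \alpha_t=\sum_{t=1}^T t^{-c}=\mathcal{O}(T^{1-c})$ by comparison with $\int_1^T s^{-c}\,ds$; and the telescoping-type term $\frac{1}{n}\sum_{t=1}^T\sum_{i=1}^n\Delta_{i,t}(y)$ is controlled by Abel summation. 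Writing $D_t=\calD_\psi(y,x_{i,t})$ and using that $1/\alpha_t$ is nondecreasing (as $\{\alpha_t\}$ is nonincreasing), that $D_{T+1}\ge0$, and that $D_t\le K$ by Assumption~\ref{online_op:assbregmanlip}, summation by parts gives $\sum_{t=1}^T \alpha_t^{-1}(D_t-D_{t+1})\le K/\alpha_T=KT^{c}$ for each $i$, hence the average over $i$ is also at most $KT^c$. Adding the three contributions yields $\NetReg(T)=\mathcal{O}(T^{1-c})+\mathcal{O}(T^c)=\mathcal{O}(T^{\max\{c,1-c\}})$.

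For the cumulative constraint violation bound \eqref{online_op:corollaryconsequ}, I would start from \eqref{online_op:theoremconsequ} and estimate $\tilde h_T(y)$ term by term using $\gamma_t=\gamma_0 t^{c}$: the first sum $\sum_{t=1}^T nF/\gamma_t=\mathcal{O}(T^{1-c})$; the second sum $\sum_{t=1}^T 2n\gamma_0 G_1^2/(\sigma\gamma_t^2)$ is a constant multiple of $\sum_{t=1}^T t^{-2c}$, which is $\mathcal{O}(T^{1-2c})$, $\mathcal{O}(\log T)$, or $\mathcal{O}(1)$ according to whether $c<1/2$, $c=1/2$, or $c>1/2$, and in every case is dominated by $T^{1-c}$; and the last term $\sum_{i=1}^n\calD_\psi(y,x_{i,1})/\gamma_0\le nK/\gamma_0=\mathcal{O}(1)$. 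Hence $\tilde h_T(y)=\mathcal{O}(T^{1-c})$, and substituting into \eqref{online_op:theoremconsequ} gives $\sqrt{\varepsilon_3 T+\varepsilon_4 T\,\tilde h_T(y)}=\sqrt{\mathcal{O}(T)+\mathcal{O}(T^{2-c})}=\mathcal{O}(T^{1-c/2})$, where the $\mathcal{O}(\sqrt{T})$ contribution is absorbed since $1/2\le1-c/2$ for all $c\in(0,1)$.

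The step I expect to require the most care is the Abel-summation estimate of $\frac{1}{n}\sum_t\sum_i\Delta_{i,t}(y)$: it is precisely here that the uniform bound $\calD_\psi(x,y)\le K$ of Assumption~\ref{online_op:assbregmanlip} is indispensable, since without a bound on the divergence the weighted telescoping sum could be as large as $\sum_t(\alpha_t^{-1}-\alpha_{t-1}^{-1})\sup_t D_t$ with no useful control, and the $\mathcal{O}(T^c)$ rate would be lost. The remaining subtlety is purely bookkeeping: verifying that the case analysis for $\sum_t t^{-2c}$ never produces a term exceeding the $T^{1-c}$ already contributed by the $nF/\gamma_t$ sum, so that the final square root is cleanly $\mathcal{O}(T^{1-c/2})$.
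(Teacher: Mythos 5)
Your proposal is correct and follows essentially the same route as the paper's own proof: invoke Lemma~\ref{online_op:theoremreg} with $y=x^*$, bound $\sum_{t=1}^T\alpha_t$ by integral comparison, control $\frac{1}{n}\sum_{t}\sum_i\Delta_{i,t}(y)$ via the weighted telescoping argument with Assumption~\ref{online_op:assbregmanlip} to get $K/\alpha_T=KT^c$, and estimate $\tilde h_T(y)=\mathcal{O}(T^{1-c})$ before taking the square root in \eqref{online_op:theoremconsequ}. The only cosmetic difference is that the paper avoids your three-case analysis of $\sum_t t^{-2c}$ by simply using $\gamma_0/\gamma_t\le 1$ to bound $\sum_t\gamma_0/\gamma_t^2\le\sum_t 1/\gamma_t=\mathcal{O}(T^{1-c})$.
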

\begin{proof}
The explicit expressions of the right-hand sides of \eqref{online_op:corollaryreg-bound} and the proof are given in  Appendix~\ref{online_op:corollaryregproof}.
\end{proof}

%\begin{remark} (Comparison to \cite{sun2017safety,NIPS2018_7852,yuan2017adaptive,yuan2021distributed,yuan2021distributedb})
Theorem~\ref{online_op:corollaryreg} shows that Algorithm~\ref{online_op:algorithm} generalizes the results in \cite{sun2017safety,NIPS2018_7852,yuan2021distributed,yuan2021distributedb}.
Specifically, by setting $c=0.5$ in Theorem~\ref{online_op:corollaryreg}, the result in \cite{sun2017safety} is recovered, although the algorithm proposed in \cite{sun2017safety} is centralized and the standard constraint violation metric rather than the stricter metrics is used. The bounds achieved in \eqref{online_op:corollaryreg-bound} are consistent with the results in \cite{NIPS2018_7852,yuan2021distributed,yuan2021distributedb}, although in \cite{NIPS2018_7852,yuan2021distributed,yuan2021distributedb} the constraint functions are time-invariant and known in advance. Moreover, in \cite{NIPS2018_7852} the proposed algorithm is centralized, and in \cite{yuan2021distributed} the loss functions are quadratic and the constraint functions are linear.
Theorem~\ref{online_op:corollaryreg} also shows that Algorithm~\ref{online_op:algorithm} achieves improved performance compared with the distributed online algorithm proposed in \cite{yuan2017adaptive}, although the global constraint functions in \cite{yuan2017adaptive} are time-invariant and known in advance by each agent. Specifically, $\NetReg(T)=\mathcal{O}(T^{0.5+\beta})$ and $\frac{1}{n}\sum_{i=1}^n\|[\sum_{t=1}^Tg(x_{i,t})]_+\|=\mathcal{O}(T^{1-\beta/2})$ were achieved in \cite{yuan2017adaptive}, where $\beta\in(0,0.5)$. The same bounds as shown in \eqref{online_op:corollaryreg-bound} have also been achieved by the distributed online algorithm proposed in \cite{yi2021regretTAC}. Compared to the algorithm in \cite{yi2021regretTAC}, the potential drawback of Algorithm~\ref{online_op:algorithm} is that it uses $G_2$, the uniform bound for the gradients of the local constraint functions\footnote{This bound is available if the global constraint function is time-invariant and known in advance by each agent as assumed in \cite{yuan2017adaptive,yuan2021distributed,yuan2021distributedb}.},  to design the algorithm parameter $\gamma_0$. However, in the following, we also show that Algorithm~\ref{online_op:algorithm}  can achieve reduced network cumulative constraint violation bounds when Slater's condition holds, while  \cite{sun2017safety,NIPS2018_7852,yuan2017adaptive,yuan2021distributed,yuan2021distributedb,yi2021regretTAC} do not have such a result.
%\end{remark}

\subsection{Slater's Condition Scenario}
In this section, we show that Algorithm~\ref{online_op:algorithm} achieves a reduced network cumulative constraint violation bound under Slater's condition.

\begin{theorem}\label{online_op:corollaryreg_slater}
Suppose Assumptions~\ref{online_op:assfunction}--\ref{online_op:assgt} hold. For all $i\in[n]$, let $\{x_{i,t}\}$ be the sequences generated by Algorithm~\ref{online_op:algorithm} with \eqref{online_op:stepsize1}. Then, for any $T\in\mathbb{N}_+$,
\begin{subequations}\label{online_op:corollaryreg_slater-bound}
\begin{align}
&\NetReg(T)=\mathcal{O}(T^{\max\{c,1-c\}}),
\label{online_op:corollaryregequ1_slater}\\
&\frac{1}{n}\sum_{i=1}^n\sum_{t=1}^T\|[g_{t}(x_{i,t})]_+\|=\mathcal{O}(T^{1-c}).
\label{online_op:corollaryconsequ_slater}
\end{align}
\end{subequations}
\end{theorem}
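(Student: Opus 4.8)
The plan is to assemble the three master inequalities of Lemma~\ref{online_op:theoremreg} with the specific stepsize choice \eqref{online_op:stepsize1}, handling the regret exactly as in Theorem~\ref{online_op:corollaryreg} and exploiting Slater's condition (Assumption~\ref{online_op:assgt}) to sharpen the constraint-violation estimate. The regret half of the claim does not use Assumption~\ref{online_op:assgt} at all, so I would reuse the Theorem~\ref{online_op:corollaryreg} argument: fix the minimizer $y^\star\in\mathcal{X}_T$ of $\sum_{t}l_t$, substitute it into \eqref{online_op:theoremregequ} so that the left-hand side becomes $\NetReg(T)$, and estimate the three resulting terms. The term $\varepsilon_1 G_1$ is $\mathcal{O}(1)$; the term $\sum_{t=1}^T\varepsilon_2\alpha_t=\varepsilon_2\sum_{t=1}^T t^{-c}=\mathcal{O}(T^{1-c})$; and the telescoping Bregman term $\frac{1}{n}\sum_i\sum_t\Delta_{i,t}(y^\star)$ is handled by Abel summation, using that $1/\alpha_t=t^c$ is nondecreasing and $\calD_\psi(y^\star,x_{i,t})\le K$ by Assumption~\ref{online_op:assbregmanlip}, which collapses the sum to $K/\alpha_T=\mathcal{O}(T^c)$. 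Adding these yields $\NetReg(T)=\mathcal{O}(T^{\max\{c,1-c\}})$, i.e.\ \eqref{online_op:corollaryregequ1_slater}.

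For the reduced violation bound \eqref{online_op:corollaryconsequ_slater}, the crucial step is to bound the \emph{local} cumulative violation $\sum_{i=1}^n\sum_{t=1}^T\|[g_{i,t}(x_{i,t})]_+\|$ \emph{linearly}, rather than through its squared version and H\"{o}lder's inequality \eqref{online_op:constrain_vio_def} (which is precisely what costs the extra $\sqrt{T}$ factor in Theorem~\ref{online_op:corollaryreg}). I would set $y=x_s$, the Slater point, in \eqref{online_op:theoremregequ2_g2}. Its left-hand side is nonnegative, since by \eqref{online_op:qibound_ab} each term $q_{i,t+1}^\top g_{i,t}(x_{i,t})/\gamma_t=\|[g_{i,t}(x_{i,t})]_+\|^2\ge0$ and every $\|\epsilon^z_{i,t}\|^2\ge0$. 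Hence $0\le h_T(x_s)+\tilde h_T(x_s)$, i.e.\ $-h_T(x_s)\le\tilde h_T(x_s)$. Invoking the Slater estimate \eqref{online_op:gtcon_hT}, namely $h_T(x_s)\le-\epsilon_s\sum_{i}\sum_{t}\|[g_{i,t}(x_{i,t})]_+\|$, this rearranges to
\begin{align*}
\sum_{i=1}^n\sum_{t=1}^T\|[g_{i,t}(x_{i,t})]_+\|\le\frac{\tilde h_T(x_s)}{\epsilon_s}.
\end{align*}

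Next I would evaluate $\tilde h_T(x_s)$ under \eqref{online_op:stepsize1}, where $\gamma_t=\gamma_0 t^c$. Its three pieces are $\frac{nF}{\gamma_0}\sum_{t=1}^T t^{-c}=\mathcal{O}(T^{1-c})$; the middle piece $\frac{2nG_1^2}{\sigma\gamma_0}\sum_{t=1}^T t^{-2c}$, which is $\mathcal{O}(T^{1-2c})$, $\mathcal{O}(\log T)$, or $\mathcal{O}(1)$ according to whether $c<1/2$, $c=1/2$, or $c>1/2$, and is $\mathbf{o}(T^{1-c})$ in every case; and the constant initial piece, bounded via $\calD_\psi(x_s,x_{i,1})\le K$. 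Thus $\tilde h_T(x_s)=\mathcal{O}(T^{1-c})$, so the local cumulative violation is $\mathcal{O}(T^{1-c})$. Finally I would feed this into \eqref{online_op:lemma-g-slater}: with $n\varepsilon_1G_2=\mathcal{O}(1)$ and $\varepsilon_5\sum_{t=1}^T\alpha_t=\mathcal{O}(T^{1-c})$, all three terms are $\mathcal{O}(T^{1-c})$, establishing \eqref{online_op:corollaryconsequ_slater}.

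The main obstacle, and the genuine novelty over Theorem~\ref{online_op:corollaryreg}, is the opening move of the second step: recognizing that Slater's condition lets one bound the raw (unsquared) local violation directly, by playing the nonnegativity of the primal surplus in \eqref{online_op:theoremregequ2_g2} against the strictly negative margin $-\epsilon_s$ furnished by \eqref{online_op:gtcon_hT}. This is exactly where the algorithm's design pays off: because the dual update \eqref{online_op:al_q} keeps the genuine constraint function $g_{i,t}$ inside the inner product $q_{i,t+1}^\top g_{i,t}(\cdot)$ rather than replacing $g_{i,t}$ by its clip everywhere, evaluating at $x_s$ still produces the useful term $-\epsilon_s$, so Slater's condition remains effective. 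Everything else reduces to routine comparisons of $p$-series sums, and the only hypothesis beyond those of Theorem~\ref{online_op:corollaryreg} that is actually invoked is Assumption~\ref{online_op:assgt}.
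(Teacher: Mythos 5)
Your proposal is correct and follows essentially the same route as the paper's own proof: the regret bound is inherited verbatim from Theorem~\ref{online_op:corollaryreg}, and the violation bound comes from setting $y=x_s$ in \eqref{online_op:theoremregequ2_g2}, using the nonnegativity of its left-hand side together with \eqref{online_op:gtcon_hT} to obtain $\epsilon_s\sum_{i,t}\|[g_{i,t}(x_{i,t})]_+\|\le\tilde{h}_T(x_s)=\mathcal{O}(T^{1-c})$, and then passing from local to network violation via \eqref{online_op:lemma-g-slater}. The only cosmetic difference is that you bound the middle term of $\tilde{h}_T(x_s)$ by summing $t^{-2c}$ directly, whereas the paper uses the cruder estimate $\gamma_0/\gamma_t^2\le1/\gamma_t$; both give $\mathcal{O}(T^{1-c})$.
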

\begin{proof}
The explicit expressions of the right-hand sides of \eqref{online_op:corollaryreg_slater-bound} and the proof are given in  Appendix~\ref{online_op:corollaryregproof_slater}.
\end{proof}

Under Slater's condition, the centralized algorithms proposed in  \cite{yu2017online,neely2017online} achieved $\mathcal{O}(\sqrt{T})$ regret  and  constraint violation bounds. In \cite{NIPS2018_7852}, it was emphasized that extending the results in \cite{yu2017online,neely2017online} to incorporate the stricter cumulative constraint violation metric remains unclear. This is due to the fact that the clipped constraint functions $\{[g_t]_+\}$ fail to satisfy Slater's condition, even if the original constraint functions $\{g_t\}$ do satisfy it. It is an open problem how to achieve reduced cumulative constraint violation bounds for online convex optimization with adversarial constraints under Slater's condition. Theorem~\ref{online_op:corollaryreg_slater} shows that Algorithm~\ref{online_op:algorithm} extends the results presented in \cite{yu2017online,neely2017online}, thereby solving the aforementioned open problem. Specifically, the results in \cite{yu2017online,neely2017online} are recovered when setting $c=0.5$ in Theorem~\ref{online_op:corollaryreg_slater}.

%\subsection{Better Performance under Strong Convexity Condition}

\subsection{Strongly Convex Scenario}
The network regret bound provided in Theorems~\ref{online_op:corollaryreg} and \ref{online_op:corollaryreg_slater} is at least $\mathcal{O}(\sqrt{T})$ and it can be reduced to strictly less than $\mathcal{O}(\sqrt{T})$ if the local loss functions are strongly convex. Without loss of generality, we assume the private parts $\{f_{i,t}(x)\}$ are strongly convex.
\begin{assumption}\label{online_op:assstrongconvex}
For any $i\in[n]$ and $t\in\mathbb{N}_+$, $\{f_{i,t}(x)\}$ are strongly convex over $\mathbb{X}$ with respect to $\psi$ with $\mu>0$, i.e., for all $x,y\in\mathbb{X}$,
\begin{align}\label{online_op:assstrongconvexequ}
f_{i,t}(x)\ge f_{i,t}(y)+\langle x-y,\nabla f_{i,t}(y)\rangle+\mu\calD_{\psi}(x,y).
\end{align}
\end{assumption}

\begin{theorem}\label{online_op:corollaryreg_sc}
Suppose Assumptions~\ref{online_op:assfunction}--\ref{online_op:assbregmanlip} and \ref{online_op:assstrongconvex} hold. For all $i\in[n]$, let $\{x_{i,t}\}$ be the sequences generated by Algorithm~\ref{online_op:algorithm} with \eqref{online_op:stepsize1}. Then, for any $T\in\mathbb{N}_+$,
\begin{subequations}\label{online_op:corollaryreg_sc-bound}
\begin{align}
&\NetReg(T)=\mathcal{O}(T^{1-c}),\label{online_op:corollaryregequ1_sc}\\
&\frac{1}{n}\sum_{i=1}^n\sum_{t=1}^T\|[g_{t}(x_{i,t})]_+\|=\mathcal{O}(T^{1-c/2}).\label{online_op:corollaryconsequ_sc}
\end{align}
\end{subequations}
Moreover, if Assumption~\ref{online_op:assgt} also holds, then
\begin{align}
\frac{1}{n}\sum_{i=1}^n\sum_{t=1}^T\|[g_{t}(x_{i,t})]_+\|=\mathcal{O}(T^{1-c}).
\label{online_op:corollaryconsequ_slater_sc}
\end{align}
\end{theorem}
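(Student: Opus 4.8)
The plan is to observe that Theorem~\ref{online_op:corollaryreg_sc} differs from Theorems~\ref{online_op:corollaryreg} and \ref{online_op:corollaryreg_slater} \emph{only} in the regret bound: the two cumulative constraint violation bounds \eqref{online_op:corollaryconsequ_sc} and \eqref{online_op:corollaryconsequ_slater_sc} are obtained by exactly the arguments that established \eqref{online_op:corollaryconsequ} and \eqref{online_op:corollaryconsequ_slater}, since none of the constraint-violation machinery of Lemma~\ref{online_op:theoremreg} invokes strong convexity. So the real work is to improve the network regret from $\mathcal{O}(T^{\max\{c,1-c\}})$ to $\mathcal{O}(T^{1-c})$, which is a genuine improvement precisely for $c>1/2$, where the $\mathcal{O}(T^c)$ contribution of the Bregman telescoping must be removed. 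To this end I would first re-derive the strengthened local bound corresponding to \eqref{online_op:theoremregequ-local}: in the proof of Lemma~\ref{online_op:theoremreg-local}, replacing the plain convexity of $f_{i,t}$ by strong convexity \eqref{online_op:assstrongconvexequ} yields the extra term $-\mu\calD_\psi(y,x_{i,t})$, equivalently replacing each $\Delta_{i,t}(y)$ on the right-hand side by $\Delta_{i,t}(y)-\mu\calD_\psi(y,x_{i,t})$. Carrying this modification through the consensus-error step that produced \eqref{online_op:theoremregequ} (which is untouched by strong convexity), I obtain, for the minimizer $y^\star\in\mathcal{X}_T$,
\begin{align*}
\NetReg(T)\le \varepsilon_1 G_1+\varepsilon_2\sum_{t=1}^T\alpha_t+\frac{1}{n}\sum_{i=1}^n\sum_{t=1}^T\Big(\Delta_{i,t}(y^\star)-\mu\calD_\psi(y^\star,x_{i,t})\Big),
\end{align*}
where $\varepsilon_1 G_1=\mathcal{O}(1)$ and $\varepsilon_2\sum_t\alpha_t=\varepsilon_2\sum_t t^{-c}=\mathcal{O}(T^{1-c})$, so it remains to show the last sum is $\mathcal{O}(1)$.

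The heart of the argument is the telescoping estimate. Writing $D_{i,t}:=\calD_\psi(y^\star,x_{i,t})$ and using $1/\alpha_t=t^c$ with $\alpha_1=1$, Abel summation gives
\begin{align*}
\sum_{t=1}^T\Big(\Delta_{i,t}(y^\star)-\mu D_{i,t}\Big)
=\frac{D_{i,1}}{\alpha_1}-\mu D_{i,1}+\sum_{t=2}^T\big(t^c-(t-1)^c-\mu\big)D_{i,t}-\frac{D_{i,T+1}}{\alpha_T}.
\end{align*}
Since $t^c-(t-1)^c$ is decreasing to $0$, there is a finite $t_0=t_0(c,\mu)$ beyond which $t^c-(t-1)^c\le\mu$, so every term with $t>t_0$ is nonpositive (as $D_{i,t}\ge0$); the finitely many terms with $t\le t_0$ are each controlled by $D_{i,t}\le K$ from Assumption~\ref{online_op:assbregmanlip} and telescope to a constant depending only on $c,\mu,K$. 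Hence the whole sum is $\mathcal{O}(1)$, yielding $\NetReg(T)=\mathcal{O}(T^{1-c})$, which is \eqref{online_op:corollaryregequ1_sc}.

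For the constraint violation bounds I would simply reuse the established inequalities of Lemma~\ref{online_op:theoremreg}. For \eqref{online_op:corollaryconsequ_sc}, \eqref{online_op:theoremconsequ} gives the bound $\sqrt{\varepsilon_3 T+\varepsilon_4 T\tilde{h}_T(y)}$; substituting $\gamma_t=\gamma_0 t^c$ shows that the dominant contribution to $\tilde{h}_T(y)$ is $\sum_t nF/\gamma_t=\mathcal{O}(T^{1-c})$ (the $\gamma_t^{-2}$ sum being $\mathcal{O}(T^{\max\{1-2c,0\}})$ and the initial-divergence term $\mathcal{O}(1)$), so $\tilde{h}_T(y)=\mathcal{O}(T^{1-c})$ and the violation is $\mathcal{O}(\sqrt{T^{2-c}})=\mathcal{O}(T^{1-c/2})$. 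For \eqref{online_op:corollaryconsequ_slater_sc}, under Assumption~\ref{online_op:assgt} I would apply \eqref{online_op:theoremregequ2_g2} at $y=x_s$: its left-hand side is nonnegative while \eqref{online_op:gtcon_hT} gives $h_T(x_s)\le-\epsilon_s\sum_{i}\sum_t\|[g_{i,t}(x_{i,t})]_+\|$, so $\epsilon_s\sum_i\sum_t\|[g_{i,t}(x_{i,t})]_+\|\le\tilde{h}_T(x_s)=\mathcal{O}(T^{1-c})$; feeding this into \eqref{online_op:lemma-g-slater} together with $\sum_t\alpha_t=\mathcal{O}(T^{1-c})$ yields the network bound $\mathcal{O}(T^{1-c})$.

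The main obstacle I anticipate is the telescoping step: one must confirm that the strong-convexity increment $-\mu\calD_\psi(y^\star,x_{i,t})$ genuinely cancels the growing differences $t^c-(t-1)^c$ uniformly in $T$, i.e.\ that the finitely many early terms hide no $T$-dependence and that the boundedness $\calD_\psi\le K$ (Assumption~\ref{online_op:assbregmanlip}) is exactly what closes that gap. The remainder is bookkeeping: checking that the consensus-error handling behind \eqref{online_op:theoremregequ} is unaffected by the extra term, and that the two constraint-violation bounds are literally inherited from the analyses of Theorems~\ref{online_op:corollaryreg} and \ref{online_op:corollaryreg_slater}.
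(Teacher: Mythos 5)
Your proposal is correct and follows essentially the same route as the paper: strong convexity contributes the extra $-\mu\calD_\psi(y,x_{i,t})$ term to the regret recursion, the Abel/telescoping sum has nonpositive coefficients $1/\alpha_t-1/\alpha_{t-1}-\mu$ once $t$ exceeds a finite threshold (the paper's $\varepsilon_8=\lceil(1/\mu)^{1/(1-c)}\rceil$) with the finitely many early terms absorbed via $\calD_\psi\le K$, and both constraint-violation bounds are inherited verbatim from the analyses of Theorems~\ref{online_op:corollaryreg} and \ref{online_op:corollaryreg_slater}, exactly as the paper does.
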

\begin{proof}
The explicit expressions of the right-hand sides of \eqref{online_op:corollaryreg_sc-bound}--\eqref{online_op:corollaryconsequ_slater_sc} and the proof are given in  Appendix~\ref{online_op:corollaryregproof_sc}.
\end{proof}

%From \eqref{online_op:constrain_vio_def} and \eqref{online_op:corollaryconsequ_sc}, we know that $\frac{1}{n}\sum_{i=1}^n\sum_{t=1}^T\|[g_{t}(x_{i,t})]_+\|=\mathcal{O}(T^{1-c/2})$, thus the bounds achieved in \eqref{online_op:corollaryregequ1_sc}--\eqref{online_op:corollaryconsequ_sc} are consistent with the result in \cite{yuan2017adaptive} for strongly convex loss functions, although in \cite{yuan2017adaptive} the global constraint functions are time-invariant and known in advance by each agent.
The same bounds as shown in \eqref{online_op:corollaryreg_sc-bound} have also been achieved by the distributed online algorithm proposed in \cite{yi2021regretTAC}. It should be pointed out that  Algorithm~\ref{online_op:algorithm} achieves a reduced network cumulative constraint violation bound under Slater's condition as shown in \eqref{online_op:corollaryconsequ_slater_sc}, while the algorithm in \cite{yi2021regretTAC} does not have such a property.
Both network regret and cumulative constraint violation bounds can be further reduced if the convex parameter $\mu$ is known in advance.
\begin{theorem}\label{online_op:corollaryreg_scmu}
Suppose Assumptions~\ref{online_op:assfunction}--\ref{online_op:assbregman} and \ref{online_op:assstrongconvex} hold. For all $i\in[n]$, let $\{x_{i,t}\}$ be the sequences generated by Algorithm~\ref{online_op:algorithm} with
\begin{align}\label{online_op:stepsize1scmu}
\alpha_t=\frac{1}{\mu t},~\gamma_t=\frac{\gamma_0}{\alpha_t},~\forall t\in\mathbb{N}_+,
\end{align} where $\gamma_0\in(0,\sigma/(4G_2^2)]$ is a constant. Then, for any $T\in\mathbb{N}_+$,
\begin{subequations}\label{online_op:corollaryreg_scmu-bound}
\begin{align}
&\NetReg(T)=\mathcal{O}(\log(T)),\label{online_op:corollaryregequ1_scmu}\\
&\frac{1}{n}\sum_{i=1}^n\sum_{t=1}^T\|[g_{t}(x_{i,t})]_+\|=\mathcal{O}(\sqrt{\log(T)T}).
\label{online_op:corollaryconsequ_scmu}
\end{align}
\end{subequations}
Moreover, if Assumption~\ref{online_op:assgt} also holds, then
\begin{align}
\frac{1}{n}\sum_{i=1}^n\sum_{t=1}^T\|[g_{t}(x_{i,t})]_+\|=\mathcal{O}(\log(T)).\label{online_op:corollaryconsequ_slater_scmu}
\end{align}
\end{theorem}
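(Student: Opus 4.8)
The plan is to treat the regret and the constraint-violation bounds separately. The constraint-violation estimates \eqref{online_op:theoremconsequ} and \eqref{online_op:lemma-g-slater} of Lemma~\ref{online_op:theoremreg}, together with \eqref{online_op:theoremregequ2_g2}, never invoke strong convexity, so they apply verbatim; only the regret estimate must be sharpened via Assumption~\ref{online_op:assstrongconvex}. Throughout I substitute $\alpha_t=1/(\mu t)$ and $\gamma_t=\gamma_0\mu t$, whose decisive features are $1/\alpha_1=\mu$ and $1/\alpha_t-1/\alpha_{t-1}=\mu$ for $t\ge2$, together with the elementary estimates $\sum_{t=1}^T\alpha_t=\mathcal{O}(\log T)$, $\sum_{t=1}^T1/\gamma_t=\mathcal{O}(\log T)$, and $\sum_{t=1}^T1/\gamma_t^2=\mathcal{O}(1)$.

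For the regret bound \eqref{online_op:corollaryregequ1_scmu} I would re-run the argument behind \eqref{online_op:theoremregequ-local}--\eqref{online_op:theoremregequ}, replacing the plain convexity inequality for $f_{i,t}$ by the strong-convexity inequality \eqref{online_op:assstrongconvexequ}; this inserts an additional $-\mu\calD_\psi(y,x_{i,t})$ on the right-hand side. Converting local regret to network regret through the Lipschitz bound \eqref{online_op:assfunction:functionLipf} and the consensus estimate \eqref{online_op:xitxbarT2_slater}, and absorbing the resulting first-order error $\|\epsilon^z_{i,t}\|$ by Young's inequality against the term $\sigma\|\epsilon^z_{i,t}\|^2/(4\alpha_t)$ already present on the left of \eqref{online_op:theoremregequ-local} (exactly as in the proof of Lemma~\ref{online_op:theoremreg}, which is what produces $\varepsilon_2=(\tilde\varepsilon_2^2+2)G_1^2/\sigma$), yields
\begin{align*}
\NetReg(T)\le\varepsilon_1G_1+\varepsilon_2\sum_{t=1}^T\alpha_t+\frac{1}{n}\sum_{i=1}^n\sum_{t=1}^T\big(\Delta_{i,t}(y)-\mu\calD_\psi(y,x_{i,t})\big),\quad\forall y\in\mathcal{X}_T.
\end{align*}
The decisive simplification is that Abel summation of the last sum, using $1/\alpha_1=\mu$ and $1/\alpha_t-1/\alpha_{t-1}=\mu$, telescopes it to $-\tfrac{\mu T}{n}\sum_{i=1}^n\calD_\psi(y,x_{i,T+1})\le0$; in particular the initial contribution $\calD_\psi(y,x_{i,1})$ cancels, which is precisely why Assumption~\ref{online_op:assbregmanlip} is unnecessary here. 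Taking $y$ to be the hindsight minimizer then leaves $\NetReg(T)\le\varepsilon_1G_1+\varepsilon_2\sum_{t=1}^T\alpha_t=\mathcal{O}(\log T)$.

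For the violation bound \eqref{online_op:corollaryconsequ_scmu} I would simply evaluate \eqref{online_op:theoremconsequ} at a fixed feasible $y$. With $\gamma_t=\gamma_0\mu t$ the three pieces of $\tilde h_T(y)$ behave as $\sum_t nF/\gamma_t=\mathcal{O}(\log T)$, $\sum_t 2n\gamma_0G_1^2/(\sigma\gamma_t^2)=\mathcal{O}(1)$, and the fixed initial term $\sum_i\calD_\psi(y,x_{i,1})/\gamma_0=\mathcal{O}(1)$, so $\tilde h_T(y)=\mathcal{O}(\log T)$. Substituting into $\sqrt{\varepsilon_3T+\varepsilon_4T\tilde h_T(y)}$ gives the claimed $\mathcal{O}(\sqrt{\log(T)T})$.

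The reduction under Slater's condition, \eqref{online_op:corollaryconsequ_slater_scmu}, is where I expect the main difficulty and where the unclipped dual update is essential. Rather than taking the Hölder route \eqref{online_op:constrain_vio_def}, I would bound the local cumulative violation directly: evaluating \eqref{online_op:theoremregequ2_g2} at $y=x_s$ and discarding the nonnegative left-hand side gives $0\le h_T(x_s)+\tilde h_T(x_s)$, while \eqref{online_op:gtcon_hT} supplies the sign-negative estimate $h_T(x_s)\le-\epsilon_s\sum_{i,t}\|[g_{i,t}(x_{i,t})]_+\|$; together these yield $\epsilon_s\sum_{i,t}\|[g_{i,t}(x_{i,t})]_+\|\le-h_T(x_s)\le\tilde h_T(x_s)=\mathcal{O}(\log T)$. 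Feeding $\sum_{i,t}\|[g_{i,t}(x_{i,t})]_+\|=\mathcal{O}(\log T)$ into \eqref{online_op:lemma-g-slater}, whose remaining terms are $n\varepsilon_1G_2=\mathcal{O}(1)$ and $\varepsilon_5\sum_t\alpha_t=\mathcal{O}(\log T)$, produces \eqref{online_op:corollaryconsequ_slater_scmu}. The subtle point is that this chain closes only because \eqref{online_op:gtcon_hT} keeps the \emph{unclipped} $g_{i,t}(x_s)$ inside the inner product $q_{i,t+1}^\top g_{i,t}(x_s)$, so that Slater's inequality \eqref{online_op:gtcon} can be applied to produce the crucial negative sign; replacing $g_{i,t}$ by its clipping would destroy this sign and collapse the estimate back to the weaker $\mathcal{O}(\sqrt{\log(T)T})$ rate.
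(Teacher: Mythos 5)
Your parts (i) and (iii) are correct and essentially coincide with the paper's argument: the Abel/telescoping computation with $1/\alpha_t-1/\alpha_{t-1}=\mu$ is exactly \eqref{online_op:dyz_sc}--\eqref{online_op:dyz_scmu}, and your Slater chain (evaluate at $y=x_s$, drop the nonnegative left-hand side, apply \eqref{online_op:gtcon_hT}, then feed the result into \eqref{online_op:lemma-g-slater}) reproduces \eqref{online_op:theoremregequ2_g2slater} and the paper's part (iii); your use of the non-strongly-convex inequality \eqref{online_op:theoremregequ2_g2} there is harmless because $x_s$ is a fixed point, so $\calD_\psi(x_s,x_{i,1})$ is a genuine constant. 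The problem is part (ii), and it sits precisely in your opening claim that the constraint-violation estimates ``never invoke strong convexity, so they apply verbatim.''

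Theorem~\ref{online_op:corollaryreg_scmu} does \emph{not} assume Assumption~\ref{online_op:assbregmanlip}, so there is no uniform bound $K$ on the Bregman divergence over $\mathbb{X}$. In \eqref{online_op:theoremconsequ} the quantity $\tilde h_T(y)$ contains $\sum_{i}\calD_\psi(y,x_{i,1})/\gamma_0$, and the inequality is only available for $y\in\mathcal{X}_T$. The sets $\mathcal{X}_T$ are nested and shrinking, and without Assumption~\ref{online_op:assbregmanlip} (or compactness of $\mathbb{X}$) there need not exist any $y\in\mathcal{X}_T$ with $\calD_\psi(y,x_{i,1})$ bounded uniformly in $T$: by \eqref{online_op:eqbergman}, $\inf_{y\in\mathcal{X}_T}\calD_\psi(y,x_{i,1})\ge\frac{\sigma}{2}\inf_{y\in\mathcal{X}_T}\|y-x_{i,1}\|^2$, and the feasible sets can recede to infinity (e.g., $g_t(x)=t-x$ on $\mathbb{X}=\mathbb{R}$ satisfies all the assumptions in force, yet $\mathcal{X}_T=[T,\infty)$). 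So your assertion that the initial term is $\mathcal{O}(1)$ is unjustified; your route proves \eqref{online_op:corollaryconsequ_scmu} only if Assumption~\ref{online_op:assbregmanlip} is added back. The paper closes this hole by carrying strong convexity into the violation estimate as well: it uses \eqref{online_op:theoremconsequ_sc}, whose right-hand side contains the extra nonpositive term $\hat h_T(y)=-\sum_{i=1}^n\sum_{t=1}^T\mu\calD_\psi(y,x_{i,t})/\gamma_t$, and observes that under \eqref{online_op:stepsize1scmu} one has $\mu/\gamma_1=1/\gamma_0$, so the $t=1$ piece of $\hat h_T(y)$ exactly cancels $\sum_i\calD_\psi(y,x_{i,1})/\gamma_0$ inside $\tilde h_T(y)$, rendering the bound independent of $\calD_\psi(y,x_{i,1})$ for every $y$. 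This is the same cancellation mechanism you correctly identified for the regret telescope but missed for the violation bound; once it is inserted, the rest of your computation ($\sum_t 1/\gamma_t=\mathcal{O}(\log T)$, $\sum_t 1/\gamma_t^2=\mathcal{O}(1)$) goes through unchanged.
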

\begin{proof}
The explicit expressions of the right-hand sides of \eqref{online_op:corollaryreg_scmu-bound}--\eqref{online_op:corollaryconsequ_slater_scmu} and the proof are given in  Appendix~\ref{online_op:corollaryregproof_scmu}.
\end{proof}

The bounds achieved in \eqref{online_op:corollaryreg_scmu-bound} are consistent with the results in \cite{NIPS2018_7852,yuan2021distributedb} for strongly convex loss functions, although the proposed algorithm in \cite{NIPS2018_7852} is centralized, and the constraint functions in \cite{NIPS2018_7852,yuan2021distributedb} are time-invariant and known in advance. Moreover, compared \eqref{online_op:corollaryreg_scmu-bound} with the results that $\NetReg(T)
=\mathcal{O}(T^c)$ and $\frac{1}{n}\sum_{i=1}^n\|[\sum_{t=1}^Tg(x_{i,t})]_+\|=\mathcal{O}(T^{1-c/2})$ as achieved in \cite{yuan2017adaptive} for strongly convex loss functions, we know that our Algorithm~\ref{online_op:algorithm} achieves improved performance, although the global constraint function in \cite{yuan2017adaptive} is time-invariant and known in advance by each agent. It should be highlighted that our Algorithm~\ref{online_op:algorithm} achieves an $\mathcal{O}(\log(T))$ network cumulative constraint violation bound under Slater's condition as shown in \eqref{online_op:corollaryconsequ_slater_scmu}, which, to the best of our knowledge, is achieved for the first time in the literature.

%\begin{remark}
Note that Assumption~\ref{online_op:assbregmanlip} is not needed in Theorem~\ref{online_op:corollaryreg_scmu}.
This assumption is not needed in Theorems~\ref{online_op:corollaryreg}--\ref{online_op:corollaryreg_slater} either when knowing the total number of rounds $T$ in advance and choosing $\alpha_t=1/T^c$ in \eqref{online_op:stepsize1}. In this case, with slight modifications of the proof, we can show that the results stated in Theorems~\ref{online_op:corollaryreg}--\ref{online_op:corollaryreg_slater} still hold.
%\end{remark}

\section{SIMULATIONS}\label{online_opsec:simulation}
In this section, we verify the theoretical results through numerical simulations.

Note that, as pointed out in the previous section, without Slater's condition, the proposed algorithm achieves the same network regret and cumulative constraint violation bounds as those in \cite{yi2021regretTAC}. It is not this paper's goal to show that the proposed algorithm has better performance without Slater's condition. The key contribution of this paper is demonstrating that reduced (network) cumulative constraint violation bounds can be achieved when Slater's condition holds, which is a significant result not found in existing literature. Therefore, we only consider examples where the constraint function satisfies Slater's condition. Specifically, similar to \cite{yi2021regretTAC}, we consider distributed online linear regression problem with time-varying linear inequality constraints formulated as follows\footnote{For a fair comparison, we do not consider regularization in the loss, i.e., set $r_t(\:\cdot\:)\equiv0$.}:
\begin{mini*}
{x}{\sum_{t=1}^{T}\sum_{i=1}^{n}\frac{1}{2}\|H_{i,t}x-z_{i,t}\|^2}{}{}
\addConstraint{x\in \mathbb{X},~A_{i,t}x-a_{i,t}\le}{{\bm 0}_{m_i},}{~\forall i\in[n]~\forall t\in[T],}
\end{mini*}
where $H_{i,t}\in\mathbb{R}^{d_i\times p}$, $z_{i,t}\in\mathbb{R}^{d_i}$, $A_{i,t}\in\mathbb{R}^{m_i\times p}$, and $a_{i,t}\in\mathbb{R}^{m_i}$ with $d_i\in\mathbb{N}_+$. Each component of $H_{i,t}$ is generated from $\calU(-1,1)$ and $z_{i,t}=H_{i,t}\bm{1}_p+\varepsilon_{i,t}$, where $\varepsilon_{i,t}$ is a standard normal random vector. Each component of $A_{i,t}$ and $a_{i,t}$ is generated from $\calU(0,2)$ and $\calU(a,a+1)$, respectively, where $a>0$ is used to guarantee Slater's condition holds. At each time $t$, an undirected random graph is used as the communication graph. Specifically, connections between agents are random and the probability of two agents being connected is $\rho$. To guarantee that Assumption~\ref{online_op:assgraph} holds, edges $(i,i+1),~i\in[n-1]$ are also added and $[W_t]_{ij}=1/n$ if $(j,i)\in\mathcal{E}_t$ and $[W_t]_{ii}=1-\sum_{j=1}^n[W_t]_{ij}$.

\bgroup
\def\arraystretch{1.15}
\begin{table}[]
\caption{Input of each algorithm.}
\label{online_op::table-input}
\centering
\small
\begin{tabular}{M{3.6cm}|M{7.2cm}N}
\hline
Algorithms  & Inputs\\

\hline
Algorithm~\ref{online_op:algorithm}      & $\alpha_t=1/t$, $\gamma_t=0.15/\alpha_t$, $\psi(x)=\|x\|^2$ &\\

\hline
Algorithm~1 in  \cite{yu2017online}   & $\alpha=T$, $V=\sqrt{\alpha}$ &\\

\hline
Algorithm~1 in \cite{yi2021regretTAC}     & $\alpha_t=0.7/t$, $\beta_t=1/\sqrt{t}$, $\gamma_t=1/\sqrt{t}$&\\

\hline
\end{tabular}
\end{table}
\egroup

We compare Algorithm~\ref{online_op:algorithm} with the centralized algorithm in \cite{yu2017online} (which considers Slater's condition but uses the standard constraint violation metric) and the distributed algorithm in \cite{yi2021regretTAC} (which uses the stricter cumulative constraint violation but does not consider Slater's condition).  We set $n=100$, $d_i=4$, $p=10$, $m_i=2$, $\mathbb{X}=[-5,5]^p$, $a=0.01$, and  $\rho=0.1$. The inputs of these algorithms are listed in TABLE~\ref{online_op::table-input}. Fig.~\ref{online_op:fig:reg_alg} and Fig.~\ref{online_op:fig:cons_alg} illustrate the trajectories of the accumulated loss and the cumulative constraint violation, respectively. Fig.~\ref{online_op:fig:reg_alg} shows that these algorithms have almost the same accumulated loss, which is in accordance with the theoretical results. Fig.~\ref{online_op:fig:cons_alg} shows that the algorithm in \cite{yi2021regretTAC} has significantly smaller cumulative constraint violation than that in \cite{yu2017online}, which is reasonable since the standard constraint violation metric rather than the stricter cumulative constraint violation metric is used in \cite{yu2017online}. More importantly, Fig.~\ref{online_op:fig:cons_alg} also shows that our proposed algorithm has significantly smaller cumulative constraint violation than that in \cite{yi2021regretTAC}, which matches the theoretical results. As previously explained, the main reason for this is that in \cite{yi2021regretTAC}, the clipped constraint function is directly used to replace the original constraint function, which renders Slater's condition ineffective. In contrast, in this paper, the clipped constraint function is solely used for updating the dual variables, which enables the use of the stricter cumulative constraint violation metric while preserving the effectiveness of Slater's condition.

\begin{figure}[]
  \centering
  \includegraphics[width=0.7\linewidth]{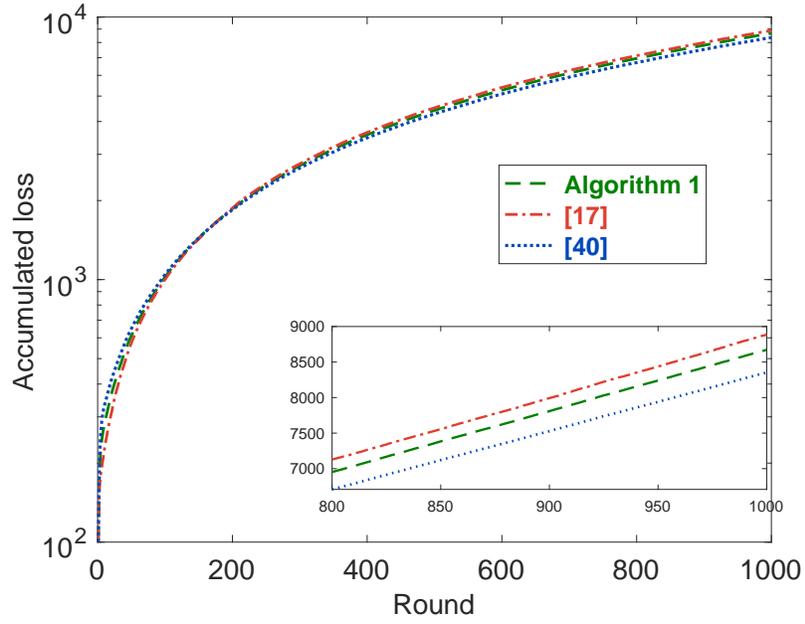}
\caption{Trajectories of the accumulated loss.}
\label{online_op:fig:reg_alg}
\end{figure}

\begin{figure}[!]
  \centering
  \includegraphics[width=0.7\linewidth]{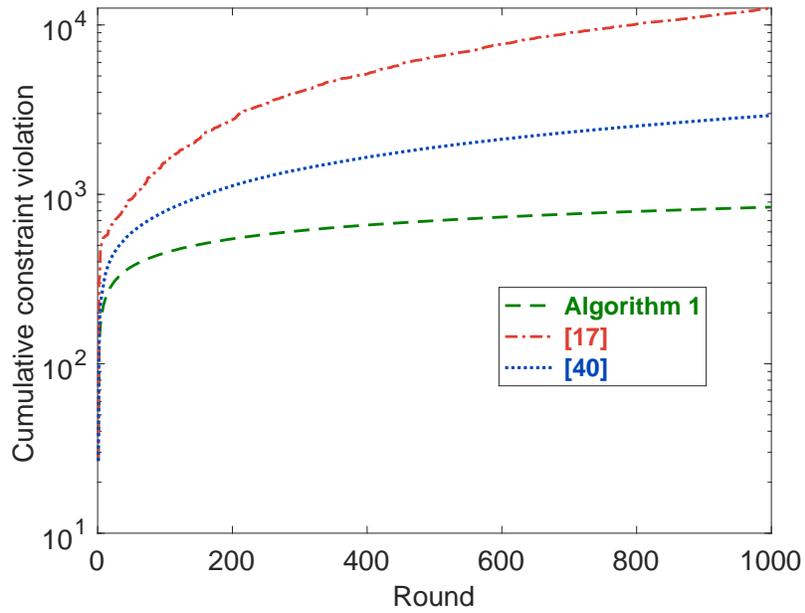}
\caption{Trajectories of the cumulative constraint violation.}
\label{online_op:fig:cons_alg}
\end{figure}

\section{CONCLUSION}\label{online_opsec:conclusion}

In this paper, we addressed the problem of distributed online convex optimization with adversarial constraints. We proposed a novel distributed online algorithm to solve this problem and conducted an analysis of network regret and cumulative constraint violation bounds for the proposed algorithm under different scenarios. Notably, we demonstrated that, for the first time, reduced (network) cumulative constraint violation bounds can be achieved under Slater's condition, both in convex and strongly convex scenarios. In the future, we plan to explore the challenging bandit setting since in various applications only the values of the loss and constraint functions are available. Additionally, we aim to tackle the issue of reducing communication complexity, as previous research has highlighted the significant impact of communication on distributed algorithms.

%\section*{ACKNOWLEDGMENTS}

\appendix\label{online_op:appendix}

\subsection{Useful Lemmas}
We first present some results on the regularized Bregman projection.
\begin{lemma}\label{online_op:lemma_mirror} (Lemma~1 in \cite{yi2020distributed})
	Suppose that $h:\mathbb{X}\rightarrow\mathbb{R}$ is a convex function and $\nabla h(x),~\forall x\in\mathbb{X}$, exists. Then, for any $y,z\in\mathbb{X}$,
	the regularized Bregman projection
	\begin{align*}%\label{online_op:lemma_mirroreq1}
		\tilde{x}=\argmin_{x\in\mathbb{X}}\{h(x)+\calD_\psi(x,z)\}
	\end{align*}
	satisfies the following inequalities
	\begin{subequations}
		\begin{align}
			\langle \nabla h(\tilde{x}), \tilde{x}-y\rangle&\le \calD_\psi(y,z)-\calD_\psi(y,\tilde{x})-\calD_\psi(\tilde{x},z),\label{online_op:lemma_mirroreq2}\\
			\|\tilde{x}-z\|&\le\frac{\|\nabla h(z)\|}{\sigma}.\label{online_op:lemma_mirrorine}
		\end{align}
	\end{subequations}
\end{lemma}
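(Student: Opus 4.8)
The plan is to derive both inequalities directly from the first-order optimality (variational) characterization of the minimizer $\tilde{x}$ together with the three-point identity for the Bregman divergence. Since $\tilde{x}=\argmin_{x\in\mathbb{X}}\{h(x)+\calD_\psi(x,z)\}$ minimizes a convex function over the convex set $\mathbb{X}$, and since the $x$-gradient of $\calD_\psi(x,z)$ equals $\nabla\psi(x)-\nabla\psi(z)$, the optimality condition reads $\langle \nabla h(\tilde{x})+\nabla\psi(\tilde{x})-\nabla\psi(z),\,y-\tilde{x}\rangle\ge 0$ for every $y\in\mathbb{X}$. Rearranging this gives $\langle \nabla h(\tilde{x}),\tilde{x}-y\rangle\le \langle \nabla\psi(\tilde{x})-\nabla\psi(z),\,y-\tilde{x}\rangle$, so to obtain \eqref{online_op:lemma_mirroreq2} it remains only to identify the right-hand side with $\calD_\psi(y,z)-\calD_\psi(y,\tilde{x})-\calD_\psi(\tilde{x},z)$.

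That identification is the three-point identity, which I would verify by expanding each divergence via its definition $\calD_\psi(a,b)=\psi(a)-\psi(b)-\langle\nabla\psi(b),a-b\rangle$. Substituting the three terms $\calD_\psi(y,z)$, $\calD_\psi(y,\tilde{x})$, $\calD_\psi(\tilde{x},z)$ and cancelling, all the $\psi$-values drop out and the surviving inner-product terms collapse to exactly $\langle\nabla\psi(\tilde{x})-\nabla\psi(z),\,y-\tilde{x}\rangle$. Combining this with the rearranged optimality inequality of the previous paragraph yields \eqref{online_op:lemma_mirroreq2} immediately.

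For the norm bound \eqref{online_op:lemma_mirrorine}, the idea is to specialize \eqref{online_op:lemma_mirroreq2} to $y=z\in\mathbb{X}$. Since $\calD_\psi(z,z)=0$, this gives $\langle\nabla h(\tilde{x}),\tilde{x}-z\rangle\le -\calD_\psi(z,\tilde{x})-\calD_\psi(\tilde{x},z)$. Next I would invoke convexity of $h$ in its monotone-gradient form, $\langle\nabla h(\tilde{x})-\nabla h(z),\tilde{x}-z\rangle\ge 0$, so that $\langle\nabla h(z),\tilde{x}-z\rangle\le\langle\nabla h(\tilde{x}),\tilde{x}-z\rangle$; chaining these gives $\calD_\psi(z,\tilde{x})+\calD_\psi(\tilde{x},z)\le\langle\nabla h(z),z-\tilde{x}\rangle\le\|\nabla h(z)\|\,\|\tilde{x}-z\|$ by Cauchy--Schwarz. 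Finally, applying the strong-convexity lower bound \eqref{online_op:eqbergman} to \emph{both} divergences yields $\calD_\psi(z,\tilde{x})+\calD_\psi(\tilde{x},z)\ge\sigma\|\tilde{x}-z\|^2$; dividing through by $\|\tilde{x}-z\|$ (the case $\tilde{x}=z$ being trivial) produces $\|\tilde{x}-z\|\le\|\nabla h(z)\|/\sigma$.

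The main obstacle is obtaining the sharp constant in \eqref{online_op:lemma_mirrorine}: a naive argument bounding $\calD_\psi(\tilde{x},z)$ alone via $\frac{\sigma}{2}\|\tilde{x}-z\|^2$ loses a factor of two, and a direct optimality argument naturally produces $\|\nabla h(\tilde{x})\|$ rather than $\|\nabla h(z)\|$. The delicate points are therefore (i) using gradient monotonicity of the convex $h$ to replace $\nabla h(\tilde{x})$ by $\nabla h(z)$, and (ii) summing the two symmetric divergences $\calD_\psi(z,\tilde{x})$ and $\calD_\psi(\tilde{x},z)$ so that the two factors of $\frac{\sigma}{2}$ combine into the full $\sigma$, exactly cancelling the factor lost by Cauchy--Schwarz. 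A secondary technicality is justifying the variational optimality condition when $\nabla h$ is only a subgradient, which is handled by working with a valid subgradient at $\tilde{x}$ throughout.
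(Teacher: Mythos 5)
Your proof is correct. Note that the paper does not prove this lemma at all---it imports it verbatim as Lemma~1 of \cite{yi2020distributed}---so there is no in-paper argument to diverge from; what you have written is essentially the standard proof of the cited result, and it is complete. The first-order optimality condition $\langle\nabla h(\tilde{x})+\nabla\psi(\tilde{x})-\nabla\psi(z),y-\tilde{x}\rangle\ge0$ combined with the three-point identity $\calD_\psi(y,z)-\calD_\psi(y,\tilde{x})-\calD_\psi(\tilde{x},z)=\langle\nabla\psi(\tilde{x})-\nabla\psi(z),y-\tilde{x}\rangle$ yields \eqref{online_op:lemma_mirroreq2} exactly as you describe, and you correctly identified the two delicate points in \eqref{online_op:lemma_mirrorine}: subgradient monotonicity to trade $\nabla h(\tilde{x})$ for $\nabla h(z)$, and summing the two symmetric divergences $\calD_\psi(z,\tilde{x})+\calD_\psi(\tilde{x},z)\ge\sigma\|\tilde{x}-z\|^2$ to recover the full factor $\sigma$ rather than $\sigma/2$. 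For completeness, a marginally different route to \eqref{online_op:lemma_mirrorine} compares function values instead of gradients: since $x\mapsto h(x)+\calD_\psi(x,z)$ is $\sigma$-strongly convex and minimized at $\tilde{x}$, one gets $h(z)\ge h(\tilde{x})+\calD_\psi(\tilde{x},z)+\frac{\sigma}{2}\|\tilde{x}-z\|^2\ge h(\tilde{x})+\sigma\|\tilde{x}-z\|^2$, after which $h(z)-h(\tilde{x})\le\langle\nabla h(z),z-\tilde{x}\rangle\le\|\nabla h(z)\|\,\|\tilde{x}-z\|$ gives the same sharp constant; both arguments are equally valid, and your closing remark about working with the particular subgradient furnished by the optimality condition disposes of the only remaining technicality.
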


We next quantify the disagreement among the local temporary primal variables $\{z_{i,t}\}$.
\begin{lemma}\label{online_op:lemma_neterror}
If Assumption~\ref{online_op:assgraph} holds. For all $i\in[n]$ and $t\in\mathbb{N}_+$, $z_{i,t}$ generated by Algorithm~\ref{online_op:algorithm} satisfy
\begin{align}
\|z_{i,t}-\bar{z}_{t}\|
&\le \tau \lambda^{t-2}\sum_{j=1}^n\|z_{j,1}\|
+\tau\sum_{s=1}^{t-2}\lambda^{t-s-2}\sum_{j=1}^n\|\epsilon^z_{j,s}\|
+\|\epsilon^z_{i,t-1}\|+\frac{1}{n}\sum_{j=1}^n\|\epsilon^z_{j,t-1}\|,\label{online_op:xitxbar}
\end{align}
where $\bar{z}_{t}=\frac{1}{n}\sum_{i=1}^nz_{i,t}$.
\end{lemma}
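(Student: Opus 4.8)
The plan is to recognize the temporary-variable update as a perturbed consensus iteration and to analyze it with the standard transition-matrix machinery. Combining the consensus step \eqref{online_op:al_z} with the definition $\epsilon^z_{i,t}=z_{i,t+1}-x_{i,t}$ gives
\begin{align*}
z_{i,t+1}=\sum_{j=1}^n[W_{t}]_{ij}z_{j,t}+\epsilon^z_{i,t},
\end{align*}
so the $\{z_{i,t}\}$ obey a linear averaging recursion driven by the perturbations $\{\epsilon^z_{i,t}\}$. First I would introduce the backward products $\Phi(t,s)=W_{t}W_{t-1}\cdots W_{s}$ (with the convention $\Phi(t,t+1)=I$) and unroll the recursion to obtain the closed form
\begin{align*}
z_{i,t}=\sum_{j=1}^n[\Phi(t-1,1)]_{ij}z_{j,1}
+\sum_{s=1}^{t-1}\sum_{j=1}^n[\Phi(t-1,s+1)]_{ij}\epsilon^z_{j,s}.
\end{align*}

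Next, since Assumption~\ref{online_op:assgraph} makes every $W_{t}$ doubly stochastic, averaging annihilates the mixing operator and yields $\bar z_{t}=\frac1n\sum_{j}z_{j,1}+\frac1n\sum_{s=1}^{t-1}\sum_{j}\epsilon^z_{j,s}$. Subtracting, I would peel off the last perturbation index $s=t-1$, for which $\Phi(t-1,t)=I$ contributes exactly $\epsilon^z_{i,t-1}-\frac1n\sum_{j}\epsilon^z_{j,t-1}$; this is the origin of the final two terms in \eqref{online_op:xitxbar}. The remaining disagreement is then written as
\begin{align*}
z_{i,t}-\bar z_{t}
=\sum_{j}\Big([\Phi(t-1,1)]_{ij}-\tfrac1n\Big)z_{j,1}
+\sum_{s=1}^{t-2}\sum_{j}\Big([\Phi(t-1,s+1)]_{ij}-\tfrac1n\Big)\epsilon^z_{j,s}
+\epsilon^z_{i,t-1}-\tfrac1n\sum_{j}\epsilon^z_{j,t-1},
\end{align*}
after which the triangle inequality reduces everything to bounding the entrywise deviations $|[\Phi(a,b)]_{ij}-\frac1n|$.

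The technical core, and the main obstacle, is the geometric mixing estimate $|[\Phi(a,b)]_{ij}-\frac1n|\le\tau\lambda^{a-b}$ with the precise constants $\tau=(1-w/4n^2)^{-2}$ and $\lambda=(1-w/4n^2)^{1/B}$ already declared in Lemma~\ref{online_op:theoremreg-local}. This is the classical convergence rate for products of doubly stochastic matrices whose positive entries are lower bounded by $w$ (Assumption~\ref{online_op:assgraph}) over graphs that are $B$-jointly strongly connected; I would either invoke this known estimate or reprove it by the standard coefficient-of-ergodicity argument, namely that over each length-$B$ window the product acquires a uniform positive entrywise lower bound, forcing a fixed contraction of the span seminorm per window. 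With the index bookkeeping $a-b=(t-1)-1=t-2$ for the initial-condition block and $a-b=(t-1)-(s+1)=t-s-2$ for the $s$-th perturbation block, substituting the estimate turns the three groups above into $\tau\lambda^{t-2}\sum_{j}\|z_{j,1}\|$, $\tau\sum_{s=1}^{t-2}\lambda^{t-s-2}\sum_{j}\|\epsilon^z_{j,s}\|$, and $\|\epsilon^z_{i,t-1}\|+\frac1n\sum_{j}\|\epsilon^z_{j,t-1}\|$, which is exactly \eqref{online_op:xitxbar}.
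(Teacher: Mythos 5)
Your proposal is correct and is essentially the paper's own argument: the paper derives the same perturbed-consensus recursion $z_{i,t}=\sum_{j=1}^n[W_{t-1}]_{ij}z_{j,t-1}+\epsilon^z_{i,t-1}$ and then simply defers to Lemma~4 of \cite{yi2021regretTAC}, whose content is exactly your unrolled transition-matrix decomposition (initial-condition block, intermediate perturbation blocks, and the identity-matrix term at $s=t-1$) combined with the classical geometric mixing estimate $|[\Phi(a,b)]_{ij}-1/n|\le\tau\lambda^{a-b}$ for $B$-jointly strongly connected doubly stochastic chains with entries bounded below by $w$. Your write-up just makes explicit what the paper outsources to the citation, and your index bookkeeping reproduces \eqref{online_op:xitxbar} exactly.
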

\begin{proof}
From \eqref{online_op:al_z} and $\epsilon^z_{i,t-1}=z_{i,t}-x_{i,t-1}$, we have
\begin{align*}
z_{i,t}=\sum_{j=1}^n[W_{t-1}]_{ij}z_{j,t-1}+\epsilon^z_{i,t-1}.
\end{align*}
Then, following the proof of Lemma~4 in \cite{yi2021regretTAC}, we know that the result holds.
\end{proof}

We finally analyze regret at one round.
\begin{lemma}\label{online_op:lemma_regretdelta}
Suppose Assumptions~\ref{online_op:assfunction} and \ref{online_op:ass_subgradient}--\ref{online_op:assbregman} hold. For all $i\in[n]$, let $\{x_{i,t}\}$ be the sequences generated by Algorithm \ref{online_op:algorithm} and $y$ be an arbitrary point in $\mathbb{X}$, then
\begin{align}\label{online_op:lemma_regretdeltaequ}
&\frac{1}{ n}\sum_{i=1}^nq_{i,t+1}^\top g_{i,t}(x_{i,t})+\frac{1}{n}\sum_{i=1}^n(l_{i,t}(x_{i,t})-l_{i,t}(y))\le \frac{1}{n}\sum_{i=1}^nq_{i,t+1}^\top g_{i,t}(y)+\frac{\tilde{\Delta}_t}{n}+\frac{1}{ n}\sum_{i=1}^n\Delta_{i,t}(y),
\end{align}
where
\begin{align*}
\tilde{\Delta}_t=\sum_{i=1}^n(G_1+G_2\|q_{i,t+1}\|)\|\epsilon^z_{i,t}\|
-\sum_{i=1}^n\frac{\sigma\|\epsilon^z_{i,t}\|^2}{2\alpha_{t}}.
\end{align*}
\end{lemma}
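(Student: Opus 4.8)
The plan is to establish the inequality for each agent $i$ separately and then average over $i$. The starting point is Lemma~\ref{online_op:lemma_mirror} applied to the primal update \eqref{online_op:al_x}: identifying $h(x)=\alpha_t\langle x,\omega_{i,t+1}\rangle+\alpha_t r_t(x)$, $z=x_{i,t}$, and $\tilde x=z_{i,t+1}$, so that $\nabla h(z_{i,t+1})=\alpha_t\omega_{i,t+1}+\alpha_t\nabla r_t(z_{i,t+1})$, inequality \eqref{online_op:lemma_mirroreq2} gives
\[
\alpha_t\langle \omega_{i,t+1}+\nabla r_t(z_{i,t+1}),z_{i,t+1}-y\rangle\le \calD_\psi(y,x_{i,t})-\calD_\psi(y,z_{i,t+1})-\calD_\psi(z_{i,t+1},x_{i,t}).
\]
I would then lower-bound the left-hand side so that the regret and constraint terms of \eqref{online_op:lemma_regretdeltaequ} emerge, and upper-bound the right-hand side via \eqref{online_op:eqbergman}.

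For the left-hand side, recall $\omega_{i,t+1}=\nabla f_{i,t}(x_{i,t})+(\nabla g_{i,t}(x_{i,t}))^\top q_{i,t+1}$ and write $z_{i,t+1}-y=\epsilon^z_{i,t}+(x_{i,t}-y)$. Convexity of $f_{i,t}$ gives $\langle\nabla f_{i,t}(x_{i,t}),x_{i,t}-y\rangle\ge f_{i,t}(x_{i,t})-f_{i,t}(y)$; since $q_{i,t+1}=\gamma_t[g_{i,t}(x_{i,t})]_+\ge\bszero$ by \eqref{online_op:al_q}, componentwise convexity of $g_{i,t}$ yields $q_{i,t+1}^\top\nabla g_{i,t}(x_{i,t})(x_{i,t}-y)\ge q_{i,t+1}^\top(g_{i,t}(x_{i,t})-g_{i,t}(y))$. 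The composite term $r_t$ requires care: \eqref{online_op:lemma_mirroreq2} produces $\langle\nabla r_t(z_{i,t+1}),z_{i,t+1}-y\rangle\ge r_t(z_{i,t+1})-r_t(y)$, which involves $r_t(z_{i,t+1})$ rather than the $r_t(x_{i,t})$ needed to reconstruct $l_{i,t}(x_{i,t})=f_{i,t}(x_{i,t})+r_t(x_{i,t})$. To reconcile this I would invoke convexity of $r_t$ once more, $r_t(z_{i,t+1})-r_t(x_{i,t})\ge\langle\nabla r_t(x_{i,t}),\epsilon^z_{i,t}\rangle$, so that the residual $f$- and $r$-gradient terms combine into the single term $\langle\nabla l_{i,t}(x_{i,t}),\epsilon^z_{i,t}\rangle$.

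Collecting these, the left-hand side is bounded below by $\alpha_t$ times $l_{i,t}(x_{i,t})-l_{i,t}(y)+q_{i,t+1}^\top(g_{i,t}(x_{i,t})-g_{i,t}(y))$ plus the two inner-product remainders $\langle\nabla l_{i,t}(x_{i,t}),\epsilon^z_{i,t}\rangle$ and $q_{i,t+1}^\top\nabla g_{i,t}(x_{i,t})\epsilon^z_{i,t}$. By Cauchy--Schwarz together with the gradient bounds \eqref{online_op:subgupper}, these are at least $-(G_1+G_2\|q_{i,t+1}\|)\|\epsilon^z_{i,t}\|$. On the right-hand side, \eqref{online_op:eqbergman} gives $\calD_\psi(z_{i,t+1},x_{i,t})\ge\frac{\sigma}{2}\|\epsilon^z_{i,t}\|^2$. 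Dividing by $\alpha_t$ and rearranging produces, for each $i$, exactly the per-agent version of \eqref{online_op:lemma_regretdeltaequ}, but with $\calD_\psi(y,z_{i,t+1})$ in place of $\calD_\psi(y,x_{i,t+1})$ and with the per-agent remainder $(G_1+G_2\|q_{i,t+1}\|)\|\epsilon^z_{i,t}\|-\frac{\sigma\|\epsilon^z_{i,t}\|^2}{2\alpha_t}$.

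The final step, which I expect to be the most delicate, is to sum over $i$ and replace $\calD_\psi(y,z_{i,t+1})$ by $\calD_\psi(y,x_{i,t+1})$ so that the telescoping quantity $\Delta_{i,t}(y)$ appears. Using $x_{i,t+1}=\sum_j[W_{t+1}]_{ij}z_{j,t+1}$, which is a convex combination because the rows of $W_{t+1}$ sum to one, Assumption~\ref{online_op:assbregman} (convexity of $\calD_\psi(y,\cdot)$) and Jensen's inequality give $\calD_\psi(y,x_{i,t+1})\le\sum_j[W_{t+1}]_{ij}\calD_\psi(y,z_{j,t+1})$; summing over $i$ and using column stochasticity ($\sum_i[W_{t+1}]_{ij}=1$) from Assumption~\ref{online_op:assgraph} yields $\sum_i\calD_\psi(y,x_{i,t+1})\le\sum_i\calD_\psi(y,z_{i,t+1})$, hence $-\sum_i\calD_\psi(y,z_{i,t+1})\le-\sum_i\calD_\psi(y,x_{i,t+1})$. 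Substituting this, dividing by $n$, and collecting the remainders into $\tilde{\Delta}_t$ gives \eqref{online_op:lemma_regretdeltaequ}. The hard part will be the bookkeeping around the composite regularizer---ensuring the $r_t(z_{i,t+1})$ generated by the mirror step is correctly converted back to $r_t(x_{i,t})$ so that the two gradient remainders merge into one $\nabla l_{i,t}$ term bounded by $G_1$---together with the careful use of double stochasticity to pass from the local variables $z_{i,t+1}$ to $x_{i,t+1}$ in the aggregated Bregman terms.
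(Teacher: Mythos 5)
Your proposal is correct and follows essentially the same route as the paper's proof: the mirror-descent inequality \eqref{online_op:lemma_mirroreq2} applied to \eqref{online_op:al_x}, convexity of $f_{i,t}$, $g_{i,t}$, $r_{t}$ with the split through $z_{i,t+1}$ so that $\nabla f_{i,t}(x_{i,t})+\nabla r_t(x_{i,t})=\nabla l_{i,t}(x_{i,t})$ is bounded by $G_1$, Cauchy--Schwarz with \eqref{online_op:subgupper}, the strong-convexity bound \eqref{online_op:eqbergman} on $\calD_\psi(z_{i,t+1},x_{i,t})$, and Jensen via Assumption~\ref{online_op:assbregman} together with double stochasticity to exchange $\calD_\psi(y,z_{i,t+1})$ for $\calD_\psi(y,x_{i,t+1})$ in the sum over agents. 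The only (immaterial) difference is bookkeeping order: the paper inserts $\pm\calD_\psi(y,x_{i,t+1})$ inside each per-agent inequality and lets the cross terms cancel after summing, whereas you sum first and then apply Jensen with column stochasticity, which is the same computation.
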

\begin{proof}
From Assumptions~\ref{online_op:assfunction} and \ref{online_op:ass_subgradient}, we have
\begin{subequations}
\begin{align}
f_{i,t}(y)&\ge f_{i,t}(x)+\langle \nabla f_{i,t}(x),y-x\rangle,~\forall x,y\in\mathbb{X},\label{online_op:subgradient1}\\
r_t(y)&\ge r_t(x)+\langle \nabla r_t(x),y-x\rangle,~\forall x,y\in\mathbb{X},\label{online_op:subgradient2}\\
g_{i,t}(y)&\ge g_{i,t}(x)+\nabla g_{i,t}(x)(y-x),~\forall x,y\in\mathbb{X}.\label{online_op:subgradient3}
\end{align}
\end{subequations}

From (\ref{online_op:subgradient1}), (\ref{online_op:subgradient2}), and \eqref{online_op:subgupper}, we have
\begin{align}\label{online_op:fxy}
l_{i,t}(x_{i,t})-l_{i,t}(y)
&=f_{i,t}(x_{i,t})+r_t(x_{i,t})-f_{i,t}(y)-r_t(y)\nonumber\\
&=f_{i,t}(x_{i,t})-f_{i,t}(y)+r_t(x_{i,t})-r_t(z_{i,t+1})+r_t(z_{i,t+1})-r_t(y)\nonumber\\
&\le\langle\nabla f_{i,t}(x_{i,t}),x_{i,t}-y\rangle+\langle\nabla r_t(x_{i,t}),x_{i,t}-z_{i,t+1}\rangle
+\langle\nabla r_t(z_{i,t+1}),z_{i,t+1}-y\rangle\nonumber\\
&=\langle\nabla f_{i,t}(x_{i,t})+\nabla r_t(x_{i,t}),x_{i,t}-z_{i,t+1}\rangle
+\langle\nabla f_{i,t}(x_{i,t})+\nabla r_t(z_{i,t+1}),z_{i,t+1}-y\rangle\nonumber\\
&\le G_1\|\epsilon^z_{i,t}\|+\langle\nabla f_{i,t}(x_{i,t})+\nabla r_t(z_{i,t+1}),z_{i,t+1}-y\rangle.
\end{align}
For the second term on the right-hand side of \eqref{online_op:fxy}, we have
\begin{align}
&\langle\nabla f_{i,t}(x_{i,t})+\nabla r_t(z_{i,t+1}),z_{i,t+1}-y\rangle\nonumber\\
&=\langle(\nabla g_{i,t}(x_{i,t}))^\top q_{i,t+1},y-z_{i,t+1}\rangle
+\langle\omega_{i,t+1}+\nabla r_t(z_{i,t+1}),z_{i,t+1}-y\rangle\nonumber\\
&=\langle(\nabla g_{i,t}(x_{i,t}))^\top q_{i,t+1},y-x_{i,t}\rangle
+\langle(\nabla g_{i,t}(x_{i,t}))^\top q_{i,t+1},x_{i,t}-z_{i,t+1}\rangle\nonumber\\
&\quad+\langle\omega_{i,t+1}+\nabla r_t(z_{i,t+1}),z_{i,t+1}-y\rangle.\label{online_op:fxy1}
\end{align}

We next to find the upper bound of each term on the right-hand side of \eqref{online_op:fxy1}.

From (\ref{online_op:subgradient3}) and $q_{i,t}\ge{\bm 0}_{m_i},~\forall i\in[n],~\forall t\in\mathbb{N}_+$, we have
\begin{align}
&\langle(\nabla g_{i,t}(x_{i,t}))^\top q_{i,t+1},y-x_{i,t}\rangle
\le q_{i,t+1}^\top g_{i,t}(y) -q_{i,t+1}^\top g_{i,t}(x_{i,t}).\label{online_op:gyxdelta}
\end{align}
From the Cauchy--Schwarz inequality and \eqref{online_op:subgupper}, we have
\begin{align}
\langle(\nabla g_{i,t}(x_{i,t}))^\top q_{i,t+1},x_{i,t}-z_{i,t+1}\rangle
\le G_2\|q_{i,t+1}\|\|\epsilon^z_{i,t}\|.\label{online_op:gxxtilde}
\end{align}
Applying (\ref{online_op:lemma_mirroreq2}) to the update (\ref{online_op:al_x}), we get
\begin{align}
&\langle\omega_{i,t+1}+\nabla r_t(z_{i,t+1}),z_{i,t+1}-y\rangle\nonumber\\
&\le\frac{1}{\alpha_{t}}\Big(\calD_\psi(y,x_{i,t})-\calD_\psi(y,z_{i,t+1})-\calD_\psi(z_{i,t+1},x_{i,t})\Big)\nonumber\\
&=\frac{1}{\alpha_{t}}\Big(\calD_\psi(y,x_{i,t})-\calD_\psi(y,x_{i,t+1})
+\calD_\psi(y,x_{i,t+1})-\calD_\psi(y,z_{i,t+1})-\calD_\psi(z_{i,t+1},x_{i,t})\Big)\nonumber\\
&=\Delta_{i,t}(y)+\frac{1}{\alpha_{t}}\Big(
\calD_\psi\Big(y,\sum_{j=1}^n[W_{t+1}]_{ij}z_{j,t+1}\Big)
-\calD_\psi(y,z_{i,t+1})-\calD_\psi(z_{i,t+1},x_{i,t})\Big)\nonumber\\
&\le \Delta_{i,t}(y)+\frac{1}{\alpha_{t}}\Big(
\sum_{j=1}^n[W_{t+1}]_{ij}\calD_\psi(y,z_{j,t+1})
-\calD_\psi(y,z_{i,t+1})-\frac{\sigma}{2}\|z_{i,t+1}-x_{i,t}\|^2\Big),\label{online_op:omgea2}
\end{align}
where the second equality holds due to (\ref{online_op:al_z}); and the last inequality holds since $W_{t+1}$ is doubly stochastic, Assumption~\ref{online_op:assbregman}, and \eqref{online_op:eqbergman}.

Summing \eqref{online_op:fxy}--(\ref{online_op:omgea2}) over $i\in[n]$, dividing by $n$, using $\sum_{i=1}^n[W_t]_{ij}=1,~\forall t\in\mathbb{N}_+$, and rearranging terms yields (\ref{online_op:lemma_regretdeltaequ}).
\end{proof}

\subsection{Proof of Lemma~\ref{online_op:theoremreg-local}}\label{online_op:theoremregproof-local}

\noindent {\bf (i)}  Noting that $g_{i,t}(y)\le{\bm 0}_{m_i},~\forall i\in[n],~\forall t\in\mathbb{N}_+$ when $y\in\calX_{T}$, summing (\ref{online_op:lemma_regretdeltaequ}) over $t\in[T]$ gives
\begin{align}\label{online_op:theoremregequ2}
&\frac{1}{n}\sum_{i=1}^{n}\sum_{t=1}^{T}(l_{i,t}(x_{i,t})-l_{i,t}(y_{t}))
\le \frac{1}{n}\sum_{i=1}^n\sum_{t=1}^T\Big(-q_{i,t+1}^\top g_{i,t}(x_{i,t})  +\frac{1}{n}\tilde{\Delta}_t+\Delta_{i,t}(y)\Big).
\end{align}

We have
\begin{align}\label{online_op:xitxtildemu}
&\sum_{t=1}^{T}\sum_{i=1}^n(G_1+G_2\|q_{i,t+1}\|)\|\epsilon^z_{i,t}\|\le\sum_{t=1}^{T}\sum_{i=1}^n\Big(\frac{2G_1^2\alpha_{t}}{\sigma}+\frac{2G_2^2\alpha_{t}\|q_{i,t+1}\|^2}{\sigma}
	+\frac{\sigma\|\epsilon^z_{i,t}\|^2}{4\alpha_{t}}\Big).
\end{align}

From \eqref{online_op:al_q}, for all $t\in\mathbb{N}_+$, we have
\begin{align}
%&q_{i,t+1}^\top g_{i,t}(x_{i,t})=\gamma_{t}\|[g_{i,t}(x_{i,t})]_+\|^2,\label{online_op:qibound_ab}\\
&\|q_{i,t+1}\|=\gamma_{t}\|[g_{i,t}(x_{i,t})]_+\|.\label{online_op:qibound}
\end{align}
Then, from \eqref{online_op:qibound_ab}, \eqref{online_op:qibound}, and $\gamma_t\alpha_t=\gamma_0$, we have
\begin{align}\label{online_op:gc}
&\frac{2G_2^2\alpha_{t}\|q_{i,t+1}\|^2}{\sigma}-q_{i,t+1}^\top g_{i,t}(x_{i,t})
=\Big(\frac{2G_2^2\gamma_0}{\sigma}-1\Big)\gamma_{t}\|[g_{i,t}(x_{i,t})]_+\|^2\le0,
\end{align}
where the inequality holds since $\gamma_0\le\sigma/(4G_2^2)$.

Combining (\ref{online_op:theoremregequ2})--\eqref{online_op:xitxtildemu} and \eqref{online_op:gc} yields \eqref{online_op:theoremregequ-local}.

\noindent {\bf (ii)}
From Assumption \ref{online_op:ass_ftgtupper}, we have
\begin{align}\label{online_op:ff}
	l_{i,t}(y)-l_{i,t}(x_{i,t})\le F,~\forall  y\in\mathbb{X}.
\end{align}

Dividing (\ref{online_op:lemma_regretdeltaequ}) by $\gamma_{t}$, using \eqref{online_op:ff}, and summing over $t\in[T]$ gives
\begin{align}
&\sum_{t=1}^T\sum_{i=1}^n\frac{q_{i,t+1}^\top g_{i,t}(x_{i,t})}{\gamma_t}\le  h_T+\sum_{t=1}^T\frac{nF}{\gamma_{t}}+\sum_{t=1}^T\frac{\tilde{\Delta}_t}{\gamma_{t}}
+\sum_{t=1}^T\sum_{i=1}^n\frac{\Delta_{i,t}(y)}{\gamma_{t}}
.\label{online_op:theoremregequ2_gslater}
\end{align}

We have
\begin{align}\label{online_op:xitxtildemu_g}
&\sum_{t=1}^{T}\sum_{i=1}^n\frac{(G_1+G_2\|q_{i,t+1}\|)\|\epsilon^z_{i,t}\|}{\gamma_{t}}\le \sum_{t=1}^{T}\sum_{i=1}^n\Big(\frac{2\gamma_0(G_1^2+G_2^2\|q_{i,t+1}\|^2)}{\sigma\gamma_{t}^2}+\frac{\sigma\|\epsilon^z_{i,t}\|^2}{4\gamma_0}\Big).
\end{align}

From $\gamma_t\alpha_t=\gamma_0$, we have
\begin{align}
\sum_{t=1}^T\frac{\Delta_{i,t}(y)}{\gamma_{t}}
&=\sum_{t=1}^T\frac{1}{\gamma_0}\Big(\calD_\psi(y,x_{i,t})-\calD_\psi(y,x_{i,t+1})\Big)\le\frac{\calD_\psi(y,x_{i,1})}{\gamma_0}.\label{online_op:dyz_g}
\end{align}

Combining \eqref{online_op:theoremregequ2_gslater}--\eqref{online_op:dyz_g}, \eqref{online_op:qibound}, and \eqref{online_op:qibound_ab}, and using $1/2\ge 2\gamma_0G_2^2/\sigma$ yields  \eqref{online_op:theoremregequ2_g2}.

\noindent {\bf (iii)} From (\ref{online_op:al_z}) and $\sum_{i=1}^n[W_t]_{ij}=\sum_{j=1}^n[W_t]_{ij}=1$,  we have
\begin{align}
\frac{1}{n}\sum_{i=1}^n\sum_{j=1}^n\|x_{i,t}-x_{j,t}\|
&=\frac{1}{n}\sum_{i=1}^n\sum_{j=1}^n\Big\|\sum_{l=1}^n[W_t]_{il}z_{l,t}-\bar{z}_{t}+\bar{z}_{t}-\sum_{l=1}^n[W_t]_{j,l}z_{l,t}\Big\|\nonumber\\
&\le2\sum_{i=1}^n\Big\|\sum_{j=1}^n[W_t]_{ij}z_{j,t}-\bar{z}_{t}\Big\|
=2\sum_{i=1}^n\Big\|\sum_{j=1}^n[W_t]_{ij}(z_{j,t}-\bar{z}_{t})\Big\|\nonumber\\
&\le2\sum_{i=1}^n\sum_{j=1}^n[W_t]_{ij}\|z_{j,t}-\bar{z}_{t}\|
=2\sum_{i=1}^n\|z_{i,t}-\bar{z}_t\|.\label{online_op:xz}
\end{align}
We have
\begin{align}
\sum_{t=3}^{T}\sum_{s=1}^{t-2}\lambda^{t-s-2}\sum_{j=1}^n\|\epsilon^z_{j,s}\|
&=\sum_{t=1}^{T-2}\sum_{j=1}^n\|\epsilon^z_{j,t}\|\sum_{s=0}^{T-t-2}\lambda^s
\le\frac{1}{1-\lambda}\sum_{t=1}^{T-2}\sum_{j=1}^n\|\epsilon^z_{j,t}\|.
\label{online_op:xitxbarT1}
\end{align}
From \eqref{online_op:xz}, \eqref{online_op:xitxbar}, and \eqref{online_op:xitxbarT1}, we have

\begin{align*}
&\frac{1}{n}\sum_{t=1}^{T}\sum_{i=1}^n\sum_{j=1}^n\|x_{i,t}-x_{j,t}\|
\le\sum_{t=1}^{T}\sum_{i=1}^n2\|z_{i,t}-\bar{z}_{t}\|\nonumber\\
&\le\sum_{t=1}^{T}\sum_{i=1}^n2\tau \lambda^{t-2}\sum_{j=1}^n\|z_{j,1}\|
+\sum_{t=2}^{T}\sum_{i=1}^n2\Big(\frac{1}{n}\sum_{j=1}^n\|\epsilon^z_{j,t-1}\|
+\|\epsilon^z_{i,t-1}\|\Big)+\sum_{t=3}^{T}\sum_{i=1}^n2\tau\sum_{s=1}^{t-2}\lambda^{t-s-2}\sum_{j=1}^n\|\epsilon^z_{j,s}\|\nonumber\\
&\le n\varepsilon_1+\sum_{t=2}^{T}\sum_{i=1}^n4\|\epsilon^z_{i,t-1}\|+\frac{2n\tau}{1-\lambda}\sum_{t=1}^{T-2}\sum_{j=1}^n\|\epsilon^z_{j,t}\|,
\end{align*}
which yields \eqref{online_op:xitxbarT2_slater}.

\noindent {\bf (iv)}
Similar to the way to get \eqref{online_op:xz}, from (\ref{online_op:al_z}), $\sum_{i=1}^n[W_t]_{ij}=\sum_{j=1}^n[W_t]_{ij}=1$, and $\|\cdot\|^2$ is convex, we have
\begin{align}
\frac{1}{n}\sum_{i=1}^n\sum_{j=1}^n\|x_{i,t}-x_{j,t}\|^2\le\sum_{i=1}^n4\|z_{i,t}-\bar{z}_t\|^2.\label{online_op:xz-squar}
\end{align}

From \eqref{online_op:xitxbar}, we have
\begin{align}
&4\sum_{t=1}^T\sum_{i=1}^n\|z_{i,t}-\bar{z}_{t}\|^2\nonumber\\
&\le 4\sum_{i=1}^n\sum_{t=1}^T\Big(\tau \lambda^{t-2}\sum_{j=1}^n\|z_{j,1}\|+\|\epsilon^z_{i,t-1}\|
+\frac{1}{n}\sum_{j=1}^n\|\epsilon^z_{j,t-1}\|+\tau\sum_{s=1}^{t-2}\lambda^{t-s-2}\sum_{j=1}^n\|\epsilon^z_{j,s}\|\Big)^2\nonumber\\
&\le 16\sum_{i=1}^n\sum_{t=1}^T\Big(\Big(\tau \lambda^{t-2}\sum_{j=1}^n\|z_{j,1}\|\Big)^2+\|\epsilon^z_{i,t-1}\|^2
+\Big(\frac{1}{n}\sum_{j=1}^n\|\epsilon^z_{j,t-1}\|\Big)^2
+\Big(\tau\sum_{s=1}^{t-2}\lambda^{t-s-2}\sum_{j=1}^n\|\epsilon^z_{j,s}\|\Big)^2\Big)\nonumber\\
&\le 16\sum_{i=1}^n\sum_{t=1}^T\Big(\Big(\tau \lambda^{t-2}\sum_{j=1}^n\|z_{j,1}\|\Big)^2+\|\epsilon^z_{i,t-1}\|^2
+\frac{1}{n}\sum_{j=1}^n\|\epsilon^z_{j,t-1}\|^2
+\tau^2\sum_{s=1}^{t-2}\lambda^{t-s-2}\sum_{s=1}^{t-2}\lambda^{t-s-2}\Big(\sum_{j=1}^n\|\epsilon^z_{j,s}\|\Big)^2\Big)\nonumber\\
&\le 16\sum_{i=1}^n\sum_{t=1}^T\Big(\Big(\tau \lambda^{t-2}\sum_{j=1}^n\|z_{j,1}\|\Big)^2+2\|\epsilon^z_{i,t-1}\|^2
+\frac{n\tau^2}{1-\lambda}\sum_{s=1}^{t-2}\lambda^{t-s-2}
\sum_{j=1}^n\|\epsilon^z_{j,s}\|^2\Big)\nonumber\\
&= 16\sum_{i=1}^n\sum_{t=1}^T\Big(\Big(\tau \lambda^{t-2}\sum_{j=1}^n\|z_{j,1}\|\Big)^2+2\|\epsilon^z_{i,t-1}\|^2\Big)+\frac{16n^2\tau^2}{1-\lambda}\sum_{j=1}^n\sum_{t=1}^{T-2}\|\epsilon^z_{j,t}\|^2\sum_{s=0}^{T-t-2}\lambda^{s}\nonumber\\
&\le \tilde{\varepsilon}_3+\tilde{\varepsilon}_4\sum_{t=1}^T\sum_{i=1}^n\|\epsilon^z_{i,t}\|^2,\label{online_op:xitxbarg}
\end{align}
where the third inequality holds due to the H\"{o}lder's inequality.

Thus, from \eqref{online_op:xz-squar}--\eqref{online_op:xitxbarg}, we know \eqref{online_op:xitxbargsquar} holds.

\noindent {\bf (v)} Applying (\ref{online_op:lemma_mirrorine}) to the update (\ref{online_op:al_x}) gives
\begin{align*}
\|\epsilon^z_{i,t}\|&=\|z_{i,t+1}-x_{i,t}\|\le\frac{\alpha_{t}\|\omega_{i,t+1}+\nabla r_{t}(x_{i,t})\|}{\sigma}\nonumber\\
&=\frac{\alpha_{t}\|\nabla f_{i,t}(x_{i,t})+(\nabla g_{i,t}(x_{i,t}))^\top q_{i,t+1}+\nabla r_{t}(x_{i,t})\|}{\sigma}\nonumber\\
&\le \frac{\alpha_{t}}{\sigma }(G_2\gamma_{t}\|[g_{i,t}(x_{i,t})]_+\|+G_1)=\frac{1}{\sigma}(G_2\gamma_{0}\|[g_{i,t}(x_{i,t})]_+\|+G_1\alpha_{t}),
\end{align*}
where the second equality holds due to (\ref{online_op:al_bigomega}); the last inequality holds due to (\ref{online_op:qibound}) and (\ref{online_op:subgupper}); and the last equality holds due to $\alpha_t\gamma_t=\gamma_0$. Thus, \eqref{online_op:xxt} holds.

\subsection{Proof of Lemma~\ref{online_op:theoremreg}}\label{online_op:theoremregproof}
%The proof follows the explanations after Lemma~\ref{online_op:theoremreg-local}

\noindent {\bf (i)} From $l_t(x)=\frac{1}{n}\sum_{j=1}^nl_{j,t}(x)$, we have
\begin{align}\label{online_op:lxit}
\sum_{i=1}^{n}l_t(x_{i,t})&=\frac{1}{n}\sum_{i=1}^{n}\sum_{j=1}^{n}l_{j,t}(x_{i,t})=\frac{1}{n}\sum_{i=1}^{n}\sum_{j=1}^{n}l_{j,t}(x_{j,t})+\frac{1}{n}\sum_{i=1}^{n}\sum_{j=1}^{n}(l_{j,t}(x_{i,t})-l_{j,t}(x_{j,t}))\nonumber\\
&\le\sum_{i=1}^{n}l_{i,t}(x_{i,t})+\frac{1}{n}\sum_{i=1}^{n}\sum_{j=1}^{n}G_1\|x_{i,t}-x_{j,t}\|,
\end{align}
where the inequality holds due to \eqref{online_op:assfunction:functionLipf}.

From \eqref{online_op:xitxbarT2_slater}, we have
\begin{align}
\sum_{t=1}^{T}\sum_{i=1}^n\sum_{j=1}^nG_1\|x_{i,t}-x_{j,t}\|\le n\varepsilon_1G_1+\sum_{t=1}^{T}\sum_{i=1}^n\Big(\frac{\tilde{\varepsilon}_2^2G_1^2\alpha_t}{\sigma}+\frac{\sigma\|\epsilon^z_{i,t}\|^2}{4\alpha_t}\Big).\label{online_op:xitxbarT2}
\end{align}

Combining \eqref{online_op:lxit}--\eqref{online_op:xitxbarT2}  and \eqref{online_op:theoremregequ-local} yields
\eqref{online_op:theoremregequ}.

\noindent {\bf (ii)} We have
\begin{align}\label{online_op:gitsq}
\|[g_{i,t}(x_{i,t})]_+\|^2
&=\|[g_{i,t}(x_{i,t})]_+-[g_{i,t}(x_{j,t})]_++[g_{i,t}(x_{j,t})]_+\|^2\nonumber\\
&\ge\frac{1}{2}\|[g_{i,t}(x_{j,t})]_+\|^2-\|[g_{i,t}(x_{i,t})]_+-[g_{i,t}(x_{j,t})]_+\|^2\nonumber\\
&\ge\frac{1}{2}\|[g_{i,t}(x_{j,t})]_+\|^2-\|g_{i,t}(x_{i,t})-g_{i,t}(x_{j,t})\|^2\nonumber\\
&\ge\frac{1}{2}\|[g_{i,t}(x_{j,t})]_+\|^2-G_2^2\|x_{i,t}-x_{j,t}\|^2,
\end{align}
where the second  and the third inequalities hold due to the nonexpansive property of the projection $[\:\cdot\:]_+$ and \eqref{online_op:assfunction:functionLipg}, respectively.

From $g_t(x)=\col(g_{1,t}(x),\dots,g_{n,t}(x))$, we have
\begin{align}\label{online_op:gtsq}
	\sum_{t=1}^T\sum_{i=1}^n\sum_{j=1}^n\|[g_{i,t}(x_{j,t})]_+\|^2
	=\sum_{t=1}^T\sum_{j=1}^n\|[g_{t}(x_{j,t})]_+\|^2.
\end{align}

Summing (\ref{online_op:gitsq}) over $i,j\in[n],~t\in[T]$, dividing by $n$, and using \eqref{online_op:gtsq} and \eqref{online_op:xitxbargsquar} yields
\begin{align}
&\frac{1}{n}\sum_{t=1}^T\sum_{j=1}^n\|[g_{t}(x_{j,t})]_+\|^2\le\varepsilon_3+\sum_{t=1}^T\sum_{i=1}^n2(\|[g_{i,t}(x_{i,t})]_+\|^2+G_2^2\tilde{\varepsilon}_4\|\epsilon^z_{i,t}\|^2).
\label{online_op:theoremregequ2_g}
\end{align}

From $g_{i,t}(y)\le{\bm 0}_{m_i},~\forall i\in[n],~\forall t\in\mathbb{N}_+$ when $y\in\calX_{T}$, we have
\begin{align}\label{online_op:corollaryregequ10h}
h_T(y)\le0.
\end{align}

Combining \eqref{online_op:theoremregequ2_g}--\eqref{online_op:corollaryregequ10h} and \eqref{online_op:theoremregequ2_g2} yields
\begin{align}
\frac{1}{n}\sum_{i=1}^n\sum_{t=1}^T\|[g_{t}(x_{i,t})]_+\|^2\le\varepsilon_3+\varepsilon_4\tilde{h}_T,~\forall y\in\mathcal{X}_T.\label{online_op:theoremconsequ1}
\end{align}
Combining (\ref{online_op:theoremconsequ1}) and \eqref{online_op:constrain_vio_def} yields
(\ref{online_op:theoremconsequ}).

\noindent {\bf (iii)} From \eqref{online_op:xitxbarT2_slater} and \eqref{online_op:xxt}, we have
\begin{align}
\frac{1}{n}\sum_{t=1}^T\sum_{i=1}^n\sum_{j=1}^n\|x_{i,t}-x_{j,t}\|
&\le n\varepsilon_1+\tilde{\varepsilon}_2\sum_{t=1}^{T}\sum_{j=1}^n\|\epsilon^z_{j,t}\|\nonumber\\
&\le n\varepsilon_1+\frac{\tilde{\varepsilon}_2}{\sigma }\sum_{t=1}^{T}\sum_{j=1}^n(G_2\gamma_{0}\|[g_{j,t}(x_{j,t})]_+\|+G_1\alpha_{t}).
	\label{online_op:lemma_neterrorequ}
\end{align}

From \eqref{online_op:assfunction:functionLipg}, we have
\begin{align}
\frac{1}{n}\sum_{j=1}^n\sum_{t=1}^T\|[g_{t}(x_{j,t})]_+\|&\le\frac{1}{n}\sum_{i=1}^n\sum_{j=1}^n\sum_{t=1}^T\|[g_{i,t}(x_{j,t})]_+\|\nonumber\\
&=\frac{1}{n}\sum_{t=1}^T\sum_{i=1}^n\sum_{j=1}^n\|[g_{i,t}(x_{i,t})]_++[g_{i,t}(x_{j,t})]_+-[g_{i,t}(x_{i,t})]_+\|\nonumber\\
&\le\frac{1}{n}\sum_{t=1}^T\sum_{i=1}^n\sum_{j=1}^n(\|[g_{i,t}(x_{i,t})]_+\|+G_2\|x_{i,t}-x_{j,t}\|),\label{online_op:lemma-g-slater1}
\end{align}
Combining \eqref{online_op:lemma_neterrorequ} and \eqref{online_op:lemma-g-slater1} yields \eqref{online_op:lemma-g-slater}.

\subsection{Proof of Theorem~\ref{online_op:corollaryreg}}\label{online_op:corollaryregproof}
\noindent {\bf (i)}
From \eqref{online_op:stepsize1}, we have
\begin{align}
&\sum_{t=1}^T\alpha_{t}=\sum_{t=2}^{T}\frac{1}{t^c}+1
\le\int_{1}^{T}\frac{1}{t^c}dt+1\le\frac{T^{1-c}}{1-c}.
\label{online_op:corollaryregequ11}
\end{align}

Let $\alpha_0=\alpha_1$. From that $\{\alpha_t\}$ is nonincreasing and (\ref{online_op:bregmanupp}), we have
\begin{align}
\frac{1}{ n}\sum_{t=1}^T\sum_{i=1}^n\Delta_{i,t}(y)
&=\frac{1}{ n}\sum_{i=1}^n\sum_{t=1}^T\Big(\frac{1}{\alpha_{t-1}}\calD_\psi(y,x_{i,t})-\frac{1}{\alpha_{t}}\calD_\psi(y,x_{i,t+1})+\Big(\frac{1}{\alpha_{t}}-\frac{1}{\alpha_{t-1}}\Big)\calD_\psi(y,x_{i,t})\Big)\nonumber\\
&\le\frac{1}{ n}\sum_{i=1}^n\Big(\frac{1}{\alpha_{0}}\calD_\psi(y,x_{i,1})
-\frac{1}{\alpha_{T}}\calD_\psi(y,x_{i,T+1})\Big)+\Big(\frac{1}{\alpha_{T}}-\frac{1}{\alpha_{0}}\Big)K\nonumber\\
&\le\frac{K}{\alpha_{T}},~\forall y\in\mathbb{X}.\label{online_op:dyz}
\end{align}

Denote $$x^*=\argmin_{x\in\mathcal{X}_{T}}\sum_{t=1}^{T}l_t(x).$$
Choosing $y=x^*$, and combining  (\ref{online_op:theoremregequ}) and (\ref{online_op:corollaryregequ11})--\eqref{online_op:dyz} yields
\begin{align}\label{online_op:corollaryregequ1_proof}
\NetReg(T)\le\varepsilon_1G_1+\frac{\varepsilon_2}{1-c}T^{1-c}+KT^c,
\end{align}
which gives \eqref{online_op:corollaryregequ1}.

\noindent {\bf (ii)}
From \eqref{online_op:stepsize1}, we have
\begin{align}\label{online_op:corollaryregequ12}
\sum_{t=1}^T\frac{\gamma_0}{\gamma_{t}^2}\le\sum_{t=1}^T\frac{1}{\gamma_{t}}
=\frac{1}{\gamma_{0}}\sum_{t=1}^T\frac{1}{t^c}\le\frac{1}{\gamma_{0}(1-c)}T^{1-c}.
\end{align}

From \eqref{online_op:bregmanupp}, we have
\begin{align}
\sum_{i=1}^{n}\frac{\calD_\psi(y,x_{i,1})}{\gamma_0}\le\frac{nK}{\gamma_0},~\forall y\in\mathbb{X}.\label{online_op:dyz_g2}
\end{align}

Combining (\ref{online_op:theoremconsequ}) and  (\ref{online_op:corollaryregequ12})--\eqref{online_op:dyz_g2} yields
\begin{align}\label{online_op:corollaryconsequ_proof}
&\Big(\frac{1}{n}\sum_{i=1}^n\sum_{t=1}^T\|[g_{t}(x_{i,t})]_+\|\Big)^2
\le\varepsilon_3T+\frac{n\varepsilon_4KT}{\gamma_0}+\frac{n\varepsilon_4(F\sigma+2G_1^2)}{\sigma\gamma_{0}(1-c)}T^{2-c}.
\end{align}
Thus, \eqref{online_op:corollaryconsequ} holds.

\subsection{Proof of Theorem~\ref{online_op:corollaryreg_slater}}\label{online_op:corollaryregproof_slater}
\noindent {\bf (i)}  From \eqref{online_op:corollaryregequ1_proof}, we have \eqref{online_op:corollaryregequ1_slater}.

\noindent {\bf (ii)} Choosing $y=x_s$ in \eqref{online_op:theoremregequ2_g2}, and using \eqref{online_op:gtcon_hT} and \eqref{online_op:al_q} yields
\begin{align}
\epsilon_s\sum_{t=1}^T\sum_{i=1}^n\|[g_{i,t}(x_{i,t})]_+\|\le \tilde{h}_T(x_s).
\label{online_op:theoremregequ2_g2slater}
\end{align}
Combining \eqref{online_op:theoremregequ2_g2slater},  \eqref{online_op:stepsize1}, and \eqref{online_op:corollaryregequ12}--\eqref{online_op:dyz_g2} yields
\begin{align}
\sum_{i=1}^n\sum_{t=1}^T\|[g_{i,t}(x_{i,t})]_+\|
\le n\varepsilon_{7}T^{1-c},\label{online_op:theoremregequ2_g2slater2}
\end{align}
where $$\varepsilon_{7}=\frac{1}{\epsilon_s}\Big(\frac{F\sigma+2G_1^2}{\sigma\gamma_0(1-c)}+\frac{K}{\gamma_0}\Big).$$

From  \eqref{online_op:lemma-g-slater}, \eqref{online_op:stepsize1}, (\ref{online_op:corollaryregequ11}), and \eqref{online_op:theoremregequ2_g2slater2}, we have
\begin{align}
\frac{1}{n}\sum_{j=1}^n\sum_{t=1}^T\|[g_{t}(x_{j,t})]_+\|\le n\varepsilon_1G_2+\Big(\frac{\varepsilon_{5}}{1-c}+n\varepsilon_{6}\varepsilon_{7}\Big)T^{1-c},\label{online_op:theoremregequ2_g2slater3}
\end{align}
which yields \eqref{online_op:corollaryconsequ_slater}.

\subsection{Proof of Theorem~\ref{online_op:corollaryreg_sc}}\label{online_op:corollaryregproof_sc}

Since Assumption~\ref{online_op:assstrongconvex} holds, \eqref{online_op:fxy} can be replaced by
\begin{align}\label{online_op:fxy_sc}
l_{i,t}(x_{i,t})-l_{i,t}(y)\le G_1\|\epsilon^z_{i,t}\|-\mu\calD_{\psi}(y,x_{i,t})+\langle\nabla f_{i,t}(x_{i,t})+\nabla r_t(z_{i,t+1}),z_{i,t+1}-y\rangle.
\end{align}
Note that compared with \eqref{online_op:fxy}, \eqref{online_op:fxy_sc} has an extra term $-\mu\calD_{\psi}(y,x_{i,t})$.
Then, \eqref{online_op:theoremregequ} can be replaced by
\begin{align}
&\frac{1}{n}\sum_{i=1}^{n}\sum_{t=1}^{T}l_t(x_{i,t})-\sum_{t=1}^{T}l_t(y)\nonumber\\
&\le\varepsilon_1G_1+\varepsilon_2\sum_{t=1}^T\alpha_{t}+\frac{1}{n}\sum_{i=1}^{n}\sum_{t=1}^T(\Delta_{i,t}(y)-\mu\calD_{\psi}(y,x_{i,t})),~\forall y\in\mathcal{X}_T.
\label{online_op:theoremregequ_sc}
\end{align}
Moreover, \eqref{online_op:theoremregequ2_g2} can be replaced by
\begin{align}
&\sum_{t=1}^T\sum_{i=1}^n\frac{1}{2}\Big(\frac{q_{i,t+1}^\top g_{i,t}(x_{i,t})}{\gamma_t}+\frac{\sigma\|\epsilon^z_{i,t}\|^2}{2\gamma_0}\Big)\le h_T(y)+\tilde{h}_T(y)+\hat{h}_T(y),~\forall y\in\mathcal{X}_T,\label{online_op:theoremregequ2_g2_sc}
\end{align}
where $$\hat{h}_T(y)=-\sum_{i=1}^{n}\sum_{t=1}^T\frac{\mu\calD_{\psi}(y,x_{i,t})}{\gamma_t}.$$
As a result, \eqref{online_op:theoremconsequ} can be replaced by
\begin{align}
&\frac{1}{n}\sum_{i=1}^n\sum_{t=1}^T\|[g_{t}(x_{i,t})]_+\|
\le\sqrt{\varepsilon_3T+\varepsilon_4T(\tilde{h}_T(y)+\hat{h}_T(y))},~\forall y\in\mathcal{X}_T.\label{online_op:theoremconsequ_sc}
\end{align}

\noindent {\bf (i)}  We have
\begin{align}
&\frac{1}{n}\sum_{t=1}^T\sum_{i=1}^n(\Delta_{i,t}(y)-\mu\calD_{\psi}(y,x_{i,t}))\nonumber\\
&=\frac{1}{n}\sum_{i=1}^{n}\sum_{t=1}^T\Big(\frac{1}{\alpha_{t-1}}\calD_\psi(y,x_{i,t})-\frac{1}{\alpha_{t}}\calD_\psi(y,x_{i,t+1})
+\Big(\frac{1}{\alpha_{t}}-\frac{1}{\alpha_{t-1}}\Big)\calD_\psi(y,x_{i,t})
-\mu\calD_{\psi}(y,x_{i,t})\Big)\nonumber\\
&=\frac{1}{n}\sum_{i=1}^{n}\Big(\frac{1}{\alpha_{0}}\calD_\psi(y,x_{i,1})
-\frac{1}{\alpha_{T}}\calD_\psi(y,x_{i,T+1})
+\sum_{t=1}^T\Big(\frac{1}{\alpha_{t}}-\frac{1}{\alpha_{t-1}}-\mu\Big)
\calD_{\psi}(y,x_{i,t})\Big).\label{online_op:dyz_sc}
\end{align}

Denote $$\varepsilon_{8}=\Big\lceil\Big(\frac{1}{\mu}\Big)^{\frac{1}{1-c}}\Big\rceil.$$ From \eqref{online_op:stepsize1}, we have
\begin{align}\label{online_op:mu_sc}
\frac{1}{\alpha_{t+1}}-\frac{1}{\alpha_{t}}-\mu
=\frac{t+1}{(t+1)^{1-c}}-\frac{t}{t^{1-c}}-\mu
<\frac{1}{t^{1-c}}-\mu\le0,~\forall t\ge\varepsilon_{8}.
\end{align}

Choosing $y=x^*\in\calX_{T}$, and combining \eqref{online_op:bregmanupp}, (\ref{online_op:corollaryregequ11}), and \eqref{online_op:theoremregequ_sc}--\eqref{online_op:mu_sc} yields
\begin{align}\label{online_op:corollaryregequ1_sc_proof}
&\NetReg(T)\nonumber\\
&\le \varepsilon_1G_1+\frac{\varepsilon_2}{1-c}T^{1-c}+\frac{1}{n\alpha_{0}}\sum_{i=1}^{n}\calD_\psi(x^*,x_{i,1})+\frac{1}{n}\sum_{i=1}^{n}\sum_{t=1}^{\varepsilon_{8}}\Big(\frac{1}{\alpha_{t}}-\frac{1}{\alpha_{t-1}}-\mu\Big)\calD_{\psi}(x^*,x_{i,t})\nonumber\\
&\le \varepsilon_1G_1+\frac{\varepsilon_2}{1-c}T^{1-c}+\varepsilon_{8}[1-\mu]_+K.
\end{align}
Hence, \eqref{online_op:corollaryregequ1_sc} holds.

\noindent {\bf (ii)}  From \eqref{online_op:corollaryconsequ_proof}, we have \eqref{online_op:corollaryconsequ_sc}.

\noindent {\bf (iii)}  From \eqref{online_op:theoremregequ2_g2slater3}, we have \eqref{online_op:corollaryconsequ_slater_sc}.

\subsection{Proof of Theorem~\ref{online_op:corollaryreg_scmu}}\label{online_op:corollaryregproof_scmu}

We know that \eqref{online_op:theoremregequ_sc}--\eqref{online_op:theoremconsequ_sc} still hold.

\noindent {\bf (i)} From \eqref{online_op:stepsize1scmu} and \eqref{online_op:dyz_sc}, we have
\begin{align}
\frac{1}{n}\sum_{i=1}^n\sum_{t=1}^T(\Delta_{i,t}(y)-\mu\calD_{\psi}(y,x_{i,t}))\le0.\label{online_op:dyz_scmu}
\end{align}

From \eqref{online_op:stepsize1scmu}, we have
\begin{align}
&\sum_{t=1}^T\alpha_{t}=\sum_{t=1}^T\frac{1}{\mu t}
=\sum_{t=2}^{T}\frac{1}{\mu t}+\frac{1}{\mu }\le\int_{1}^{T}\frac{1}{\mu t}dt+\frac{1}{\mu }\le\frac{1}{\mu}(\log(T)+1).
\label{online_op:corollaryregequ11scmu}
\end{align}

Choosing $y=x^*\in\calX_{T}$ in \eqref{online_op:theoremregequ_sc}, and using \eqref{online_op:dyz_scmu}--(\ref{online_op:corollaryregequ11scmu}) yields
\begin{align*}%\label{online_op:corollaryregequ1_sc_proofmu}
\NetReg(T)\le \varepsilon_1G_1+\frac{\varepsilon_2}{\mu}(\log(T)+1).
\end{align*}
Hence, \eqref{online_op:corollaryregequ1_scmu} holds.

\noindent {\bf (ii)}   From \eqref{online_op:stepsize1scmu}, we have

\begin{align*}%\label{online_op:corollaryregequ12scmu}
&\sum_{t=1}^T\frac{1}{\gamma_{t+1}}=\frac{1}{\gamma_{0}\mu}\sum_{t=1}^T\frac{1}{t}\le\frac{1}{\gamma_{0}\mu}(\log(T)+1),\\
&\sum_{t=1}^T\frac{\gamma_0}{\gamma_{t+1}^2}=\frac{1}{\gamma_{0}\mu^2}\sum_{t=1}^T\frac{1}{t^2}=\frac{1}{\gamma_{0}\mu^2}\Big(\sum_{t=2}^T\frac{1}{t^2}+1\Big)\le\frac{1}{\gamma_{0}\mu^2}\Big(\int_{t=2}^T\frac{1}{t^2}dt+1\Big)\le\frac{2}{\gamma_{0}\mu^2},\\
&\sum_{i=1}^{n}\frac{\calD_\psi(y,x_{i,1})}{\gamma_0}+\hat{h}_T(y)
\le\sum_{i=1}^{n}\frac{\calD_\psi(y,x_{i,1})}{\gamma_0}-\sum_{i=1}^{n}\frac{\mu\calD_{\psi}(y,x_{i,t})}{\gamma_1}=0,~\forall y\in\mathbb{X},
\end{align*}
which yield
\begin{align}
\tilde{h}_T(y)+\hat{h}_T(y)
\le\varepsilon_{9}+\frac{nF\log(T)}{\gamma_{0}\mu},
\label{online_op:theoremregequ2_g2slater2scmu}
\end{align}
where
$$
\varepsilon_{9}=\frac{nF}{\gamma_0\mu}
+\frac{4nG_1^2}{\sigma\gamma_{0}\mu^2}.$$

From \eqref{online_op:theoremconsequ_sc} and \eqref{online_op:theoremregequ2_g2slater2scmu}, we have
\begin{align*}%\label{online_op:corollaryconsequ_proofscmu}
&\Big(\frac{1}{n}\sum_{i=1}^n\sum_{t=1}^T\|[g_{t}(x_{i,t})]_+\|\Big)^2
\le\varepsilon_3T+\varepsilon_4T\Big(\varepsilon_{9}+\frac{nF\log(T)}{\gamma_{0}\mu}\Big),
\end{align*}
which yields \eqref{online_op:corollaryconsequ_scmu}.

\noindent {\bf (iii)}  Choosing $y=x_s$ in \eqref{online_op:theoremregequ2_g2_sc}, and using \eqref{online_op:gtcon_hT} and \eqref{online_op:al_q} yields
\begin{align}
\epsilon_s\sum_{t=1}^T\sum_{i=1}^n\|[g_{i,t}(x_{i,t})]_+\|\le \tilde{h}_T(x_s)+\hat{h}_T(x_s).
\label{online_op:theoremregequ2_g2slater_sc}
\end{align}

From  \eqref{online_op:lemma-g-slater}, \eqref{online_op:stepsize1scmu}, and (\ref{online_op:corollaryregequ11scmu})--\eqref{online_op:theoremregequ2_g2slater_sc}, we have
\begin{align*}%\label{online_op:theoremregequ2_g2slater3scmu}
&\frac{1}{n}\sum_{j=1}^n\sum_{t=1}^T\|[g_{t}(x_{j,t})]_+\|
\le n\varepsilon_1G_2+\frac{\varepsilon_5}{\mu}(\log(T)+1)+
\frac{\varepsilon_6}{\epsilon_s}\Big(\varepsilon_{9}+\frac{nF\log(T)}{\gamma_{0}\mu}\Big).
\end{align*}
Hence, \eqref{online_op:corollaryconsequ_slater_scmu} holds.

\bibliographystyle{IEEEtran}
\bibliography{refs}

\end{document}